\documentclass[a4paper,reqno,oneside,11pt]{amsart}

\usepackage[utf8]{inputenc}

\usepackage[dvipsnames]{xcolor} 
\colorlet{cite}{red}
\usepackage{tikz}  

\usetikzlibrary{arrows,positioning}
\tikzset{ 
  baseline=-2.3pt,
  text height=1.5ex, text depth=0.25ex,
  >=stealth,
  node distance=2cm,
  mid/.style={fill=white,inner sep=2.5pt},
}

\setlength{\textwidth}{6.7in}
\setlength{\oddsidemargin}{-0.3in}
\setlength{\evensidemargin}{-0.3in}
\setlength{\textheight}{9in}
\setlength{\topmargin}{0in}

\usepackage{lmodern}
\usepackage{tikz-cd}
\usepackage{amsthm, amssymb, amsfonts}

\usepackage{amsthm, amssymb, amsfonts}	
\usepackage[all]{xy}
\usepackage{graphicx,caption,subcaption}
\usepackage{microtype}
\usepackage{DotArrow}
\usepackage[makeroom]{cancel}
\usepackage[%
  bookmarks=true,			
  unicode=true,			
  pdftitle={F-Lie groups},		%
  pdfauthor={Torres-Gomez, Valencia},	%
  pdfkeywords={Pseudo-Riemannian, F-manifold, Lie group},	
  colorlinks=true,		
  linkcolor=black,			
  citecolor=black,		
  filecolor=magenta,		
  urlcolor=RoyalBlue			
]{hyperref}				

\newtheoremstyle{mydef}
  {}		
  {}		
  {}		
  {}		
  {\scshape}	
  {. }		
  { }		
  {\thmname{#1}\thmnumber{ #2}\thmnote{ #3}}	

\newtheorem{theorem}{Theorem}[section]
\newtheorem*{theorem*}{Theorem}
\newtheorem{proposition}[theorem]{Proposition}
\newtheorem*{proposition*}{Proposition}
\newtheorem{lemma}[theorem]{Lemma}
\newtheorem*{lemma*}{Lemma}
\newtheorem{corollary}[theorem]{Corollary}
\newtheorem*{corollary*}{Corollary}
\theoremstyle{definition}
\newtheorem{definition}[theorem]{Definition}
\newtheorem{example}[theorem]{Example}

\theoremstyle{remark}
\newtheorem{remark}[theorem]{Remark}

\newtheorem*{conjecture*}{Conjecture}
\usepackage{tikz}

\author{Alexander Torres-Gomez {\tiny and} Fabricio Valencia}
\subjclass[2020]{53D45, 16E40, 17A30, 22F30, 17B63}
\address{}
\date{\today}

\address{A. Torres-Gomez  - Instituto de Matem\'aticas, Universidad de Antioquia , Calle 70 $\#$ 52–21, Medell\'in - Colombia.
	\newline  
      \phantom{xx}
  F. Valencia - Instituto de Matem\'atica e Estat\'istica, Universidade de S\~ao Paulo, Rua do Mat\~ao 1010, Cidade Universit\'aria, 05508-090 S\~ao Paulo - Brasil.
  \newline
	\phantom{xx}
galexander.torres@udea.edu.co, fabricio.valencia@ime.usp.br}

\title{Double extension of flat pseudo-Riemannian $F$-Lie algebras}

\begin{document}
\maketitle

\begin{abstract}
We define the concept of a flat pseudo-Riemannian $F$-Lie algebra and construct its corresponding double extension. This algebraic structure can be interpreted as the infinitesimal analogue of a Frobenius Lie group devoid of Euler vector fields. We show that the double extension provides a framework for generating all weakly flat Lorentzian non-abelian bi-nilpotent $F$-Lie algebras possessing one-dimensional light-cone subspaces.  A similar result can be established for nilpotent Lie algebras equipped with flat scalar products of signature $(2,n-2)$ where  $n\geq 4$. Furthermore, we use this technique to construct Poisson algebras exhibiting compatibility with flat scalar products. 
\end{abstract}

\section{Introduction}

Frobenius manifolds emerged as a geometric incarnation of the Witten-Dijkgraaf-Verlinde-Verlinde equation within the framework of two-dimensional topological field theories. Dubrovin \cite{Dubrovin} pioneered the study of these geometric objects. Subsequently, Hertling and Manin \cite{HertlingManin} introduced the notion of an $F$-manifold, a geometric construction that relaxes some conditions of a Frobenius manifold and serves as the primary motivation for our current investigation. Notably, all Frobenius manifolds are inherently $F$-manifolds.\\

The ubiquity of these manifolds extends across various mathematical and physical contexts, leading to a wealth of intriguing applications and consequences. A comprehensive discussion, along with relevant references, can be found in \cite{Hertling}. Specifically, $F$-manifolds exhibit close connections to singularity theory, quantum cohomology, symplectic geometry, and the realm of integrable systems.\\

Our investigation centers on $F$-manifolds satisfying an additional weak Frobenius-like identity. We use algebraic techniques to achieve the twofold objective of constructing and classifying invariant examples of these geometric objects.  Specifically, this work introduces a double extension process tailored to constructing and exploring \emph{flat pseudo-Riemannian F-Lie algebras} (as defined in Definition \ref{mainDef3} below). This process allows us to explore some of their inherent properties. We build upon the double extension process developed in \cite{AubertMedina} for flat pseudo-Riemannian Lie algebras. Additionally, we leverage concepts from Nijenhuis cohomology for left-symmetric algebras \cite{Nijenhuis} and Hochschild cohomology for associative algebras  \cite{Hochschild, Harrison}, with a focus on the associative-commutative case.\\

The paper is organized in the following manner. 
\begin{itemize}

\item Section \ref{S:FPRFS} introduces the concept of a flat pseudo-Riemannian $F$-structure on a 
smooth manifold, with a specific focus on Lie groups and their Lie algebras. It also discusses extensions of this structure and its relation to Frobenius manifolds.\\

\item Section \ref{S:DE} presents a method for constructing $(n + 2)$-dimensional (weakly) flat pseudo-Riemannian $F$-Lie algebras from $n$-dimensional (weakly) ones. This method involves an explicit set of parameters (structure coefficients) satisfying specific algebraic equations. As a key result, we obtain all weakly flat Lorentzian non-abelian bi-nilpotent $F$-Lie algebras with 1-dimensional light-cone subspaces that also function as associative two-sided ideals. Additionally, we prove the non-existence of flat Lorentzian non-abelian bi-nilpotent $F$-Lie algebras $(\mathfrak{g},\circ, \langle \cdot,\cdot\rangle)$ with the following properties: a 1-dimensional two-sided ideal $\mathbb{R}\hat{a}$ of $(\mathfrak{g},\circ)$ within the intersection of the kernel $N(\mathfrak{g})$ and the center $Z(\mathfrak{g})$ with $\langle \hat{a},\hat{a}\rangle=0$. This section also extends these results to nilpotent Lie algebras equipped with flat scalar products of signature $(2,n-2)$ for $n\geq 4$.\\

\item Section \ref{S:TSC} describes how the core construction can be adapted to generate two additional double extension processes applicable to Lie algebras of $F$-strong symmetric type and Frobenius-like Poisson algebras.\\

\item Section \ref{S:E} provides concrete examples by applying the main constructions developed throughout the paper.\\

\end{itemize}

\vspace{0.3cm}
{\bf Acknowledgments:} F. Valencia was supported by Grant 2020/07704-7 Sao Paulo Research Foundation - FAPESP.

\section{Flat Pseudo-Riemannian $F$-structures}\label{S:FPRFS}
This section establishes the key definitions that will be employed throughout the remainder of this paper.\\

\begin{definition}\label{mainDef}
A \emph{flat pseudo-Riemannian $F$-structure} on a smooth manifold $M$ is a triple $(\circ, \eta,e)$, where $\circ:\mathfrak{X}(M)\times \mathfrak{X}(M)\to \mathfrak{X}(M)$ is an associative and commutative $C^\infty(M)$-bilinear multiplication, $e$ is a vector field on $M$ that acts as a unit element with respect to $\circ$, and $\eta$ is a flat pseudo-Riemannian metric on $M$ such that:
\begin{enumerate}
\item[i.] $\eta(X\circ Y,Z)=\eta(X,Y\circ Z)$ for all $X,Y,Z\in \mathfrak{X}(M)$, 
\item[ii.] $e$ is flat with respect to the Levi--Civita connection $\nabla$ associated to $\eta$, i.e. $\nabla e=0$, and
\item[iii.] the following Hertling-Manin relation holds
\begin{equation}\label{Hertling-ManinEquation0}
	\textnormal{Lie}_{X\circ Y}(\circ)=X\circ \textnormal{Lie}_Y(\circ)+Y\circ \textnormal{Lie}_X(\circ),\quad X,Y\in  \mathfrak{X}(M).
\end{equation}
Here, ``$\textnormal{Lie}$'' denotes the Lie derivative of tensor fields. 
\end{enumerate}

A smooth manifold $M$ equipped with a flat pseudo-Riemannian $F$-structure $(\circ, \eta,e)$ is referred to as a \emph{flat pseudo-Riemannian $F$-manifold}. We can relax some conditions of a flat pseudo-Riemannian $F$-structure to obtain a weaker geometric object. Specifically, $(M,\circ,\eta)$ is called a \emph{weakly flat pseudo-Riemannian $F$-manifold} if it satisfies all the conditions of a flat pseudo-Riemannian $F$-structure except possibly one of the following: the flatness condition $\nabla e=0$, and/or the identity element condition $e\circ=\textnormal{id}_{\mathfrak{X}(M)}$.
\end{definition}

We establish some crucial properties of flat pseudo-Riemannian $F$-manifolds that will be instrumental throughout this work.

\begin{itemize}
\item Frobenius Algebra Structure: for each point $p\in M$, the tangent space $T_pM$ equipped with the product $\circ_p$ and the metric $\eta_p$ becomes a Frobenius algebra. The unit vector field $e$ satifies $\textnormal{Lie}_{e}(\circ)=0$. Furthermore, assuming $e$ is flat, the 1-form $\alpha_e$ defined by $\alpha_e(X)=\eta(X,e)$ is closed.\\

\item The Hertling-Manin Tensor: the Hertling-Manin identity \eqref{Hertling-ManinEquation0} can be expressed in a more explicit form as
\begin{eqnarray*}
	&  & [X\circ Y,Z\circ W]-[X\circ Y,Z]\circ W-[X\circ Y,W]\circ Z -X\circ [Y,Z\circ W]+X\circ [Y,Z]\circ W\\
	&  & + X\circ [Y,W]\circ Z-Y\circ [X,Z\circ W]+Y\circ [X,Z]\circ W+ Y\circ [X,W]\circ Z=0,
\end{eqnarray*}
for any four arbitrary (local) vector fields $X,Y,Z,W$ on $M$. The left-hand side of the above expression defines a $(4, 1)$-tensor field denoted by $HM(X, Y, Z, W)$. This tensor exhibits specific symmetries: symmetric in $X, Y$ and $W, Z$, and skew-symmetric when swapping $X, Y$ with $W, Z$. Furthermore, $HM$ can be expressed, using the Leibnizator $\mathcal{L}$, defined by

$$\mathcal{L}(X,Y,Z):=[X,Y\circ Z]-[X,Y]\circ Z-[X,Z]\circ Y \; ,$$
as
$$HM(X,Y,X,W)=\mathcal{L}(X\circ Y,Z,W)-X\circ \mathcal{L}(Y,Z,W)-Y\circ \mathcal{L}(X,Z,W).$$

The Leibnizator is symmetric in its second and third arguments, that is, $\mathcal{L}(X,Y,Z)=\mathcal{L}(X,Z,Y)$.\\

\item Weak Frobenius Manifolds: introduce a $(3, 0)$-tensor field $A$ defined by $A(X,Y,Z)=\eta(X\circ Y, Z)$, for all $X,Y,Z\in \mathfrak{X}(M)$. Based on results from \cite{Hertling} (Theorem 2.15) if the covariant derivative $(\nabla_WA)(X,Y,Z)$ exhibits symmetry in all four arguments, then the Hertling-Manin identity holds. However, the converse is not always true, as our constructions will show. These properties allow us to consider flat pseudo-Riemannian $F$-manifolds as a special case of weak Frobenius manifolds without Euler vector fields (\cite[p. 22, Remarks 2.17]{Hertling}). Importantly, the expression $(\nabla_WA)(X,Y,Z)$ is always symmetric in $X,Y,Z$. Demanding symmetry in all four arguments is equivalent to requiring the $(3, 1)$-tensor field $\nabla\circ$, defined by $\nabla\circ(X,Y,Z)=\nabla_X(Y\circ Z)-(\nabla_XY)\circ Z-Y\circ (\nabla_XZ)$, to be symmetric in all three arguments. Additionally, this all-symmetry condition is further equivalent to the existence of a local potential $\Phi\in C_p^\infty(U)$ around every $p\in U \subset M$,  such that $(XYX)\Phi=A(X,Y,Z)$ for any flat local vector fields $X,Y,Z$.
\end{itemize}

Note that flat pseudo-Riemannian $F$-manifolds are a specific subclass of pseudo-Riemannian $F$-manifolds introduced in \cite{ArsieBuryakLorenzoniRossi}.\\

We now consider connected and simply connected Lie groups. Let $G$ be such a group with Lie algebra $\mathfrak{g}$. For each element $x\in \mathfrak{g}$, we denote by $x^+$ its corresponding left-invariant vector field on $G$. A multiplicative structure $\circ$ on $G$, as defined in Definition \ref{mainDef}, is said to be \emph{left-invariant} if $x^+\circ y^+$ is left-invariant for all $x,y\in \mathfrak{g}$. This translates to requiring that the left multiplication diffeomorphisms $l_g:G \to G$ (where $g \in G$) preserve the multiplicative structure $\circ$. In other words, for all $g \in G$, $(l_g)_\ast \circ=\circ ((l_g)_\ast\times (l_g)_\ast)$. A flat pseudo-Riemannian $F$-Lie group is a quadruple $(G,\circ,\eta,e)$ where $G$ is a connected and simply connected Lie group and $(\circ,\eta,e)$ is a (weakly) flat pseudo-Riemannian $F$-structure on $G$. Crucially, each of the tensors $\circ,\eta, e$ is left-invariant.\\

As alluded to previously, our objective is to investigate specific characteristics of flat pseudo-Riemannian $F$-Lie groups. This investigation aims to yield noteworthy examples and constructions within this geometric framework.  Consistent with this approach, we will leverage the associated infinitesimal counterpart, the Lie algebra of the Lie group.

\begin{definition}\label{mainDef3}
Let $\mathfrak{g}$ be a real finite-dimensional Lie algebra. A flat pseudo-Riemannian $F$-structure on $\mathfrak{g}$ is a triple  $(\circ, \langle \cdot,\cdot \rangle,e),$ where $\circ:\mathfrak{g}\times \mathfrak{g}\to \mathfrak{g}$ is an associative and commutative bilinear product with a unit element $e \in \mathfrak g$ and $\langle \cdot,\cdot \rangle:\mathfrak{g}\times \mathfrak{g}\to\mathbb{R}$ is a flat scalar product\footnote{By scalar product, we mean a non-degenerate  symmetric bilinear form on $\mathfrak{g}$.}, satisfying the following conditions:

		\begin{enumerate}
		\item[i.] $\langle x\circ y,z \rangle=\langle x,y\circ z \rangle$ for all $x,y,x\in \mathfrak{g}$, 
		\item[ii.] $e$ is flat with respect to the Levi--Civita product $\ast$ associated to $\langle \cdot,\cdot \rangle$, i.e. $x\ast e=0$ for all $x\in \mathfrak{g}$, and
		\item[iii.] the following Hertling-Manin relation holds, for all $x,y,z,w\in \mathfrak{g}$,
				\begin{eqnarray*}
			&  & [x\circ y,z\circ w]-[x\circ y,z]\circ w-[x\circ y,w]\circ z -x\circ [y,z\circ w]+x\circ [y,z]\circ w\\
			&  & + x\circ [y,w]\circ z-y\circ [x,z\circ w]+y\circ [x,z]\circ w+ y\circ [x,w]\circ z=0.
		\end{eqnarray*}
		Here, $[\cdot , \cdot ]$ denotes the Lie bracket on  $\mathfrak{g}$.\\
	\end{enumerate} 

A quadruple $(\mathfrak{g},\circ,\langle \cdot,\cdot \rangle,e)$, satisfying these conditions, is referred to as a \emph{flat pseudo-Riemannian F-Lie algebra}.  If everything except $x\ast e=0$ or $e\circ x =x$ (for all $x\in\mathfrak{g}$) is satisfied, then we say that $(\mathfrak{g},\circ,\langle \cdot,\cdot \rangle)$ is a \emph{weakly flat pseudo-Riemannian F-Lie algebra}.\\
\end{definition}

The formula in item i. of Definition \ref{mainDef3} is referred to as the \emph{Frobenius identity}.  The pair $(\mathfrak{g},\langle \cdot,\cdot\rangle)$ is a well-established concept in the literature known as a \emph{flat pseudo-Riemannian Lie algebra}. These infinitesimal (algebraic) objects and their corresponding global counterparts have been extensively studied (see, e.g.,  \cite{AubertMedina,AitBenHaddouBoucettaLebzioui,BoucettaLebzioui1,BoucettaLebzioui2}).  For our purposes, we will primarily focus on applying some of the techniques developed in \cite{AubertMedina}. The Levi-Civita product $\ast$ on $\mathfrak{g}$ induced by the flat metric $\langle \cdot,\cdot \rangle$ is uniquely determined by the formula
$$2\langle x\ast y,z \rangle = \langle [x,y],z \rangle-\langle [y,z],x \rangle+\langle [z,x],y \rangle,$$
for all $ x,y,z\in \mathfrak{g}$. This formula demonstrably satisfies both the Lie bracket identity $[x,y]=x\ast y-y\ast x$ and the metric compatibility condition $\langle x\ast y,z \rangle+\langle y,x\ast z \rangle=0$. Furthermore, the Levi-Civita product is flat, meaning it defines a left-symmetric product on $\mathfrak{g}$. In other words, it verifies the following identity
$$(x\ast y)\ast z-x\ast(y\ast z)=(y\ast x)\ast z-y\ast (x\ast z),$$
for all $x,y,z\in \mathfrak{g}$ (see, e.g., \cite{Burde} for details). We call a pair $(\mathfrak{g},\ast)$ satisfying this property as a \emph{left-symmetric algebra}. It is worth noting that similar concepts to those in Definition \ref{mainDef3}, without a scalar product and exploring compatibility conditions between associative commutative products and left-symmetric products on vector spaces, have been explored in the literature \cite{GozeRemm,LuiShengBai}.  Our focus, however, extends beyond these existing notions.  We aim to investigate a geometric object that exemplifies what could be termed \emph{weak Frobenius Lie groups with no Euler vector fields}. The constructions developed in \cite{GozeRemm,LuiShengBai} have served as a source of inspiration for our study of Poisson algebras that exhibit compatibility with flat scalar products.  This compatibility can be achieved by leveraging some of the algebraic techniques employed in this work.  This is particularly relevant because the Hertling-Manin relation holds trivially for Poisson algebras, as their Leibnizator vanishes identically.  Consequently, the task of constructing Poisson algebras within our framework becomes significantly more streamlined and natural.

\section{double extension}\label{S:DE}

This section introduces the double extension process for flat pseudo-Riemannian $F$-Lie algebras. To facilitate this, we first provide a concise overview of two relevant cohomology theories. We closely follow the results established in \cite{Dzhumadil,Harrison,Hochschild,Nijenhuis,Remm}. It is worth noting that, while all constructions can be generalized to complex vector spaces, we restrict our focus here to real vector spaces for clarity. 

\subsection*{Nijenhuis cohomology for left symmetric algebras}

Let $(\mathfrak{g},\ast)$ be a real left-symmetric algebra and $V$ be a finite-dimensional real vector space. We say $V$ possesses the structure of a $(\mathfrak{g},\ast)$-\emph{bimodule} if there exist bilinear maps $\vartriangleright:\mathfrak{g}\times V\to V$ and $\vartriangleleft:V\times \mathfrak{g}\to V$ satisfying the following compatibility conditions, for all $x,y\in\mathfrak{g}$ and $ v\in V$,

\begin{eqnarray*}
	&  & x\vartriangleright(y\vartriangleright v)-y\vartriangleright(x\vartriangleright v)=(x\ast y-y\ast x)\vartriangleright v\quad\textnormal{and}\label{bimo1}\\
	&  & x\vartriangleright(v\vartriangleleft y)-(x\vartriangleright v)\vartriangleleft y=v\vartriangleleft (x\ast y)-(v\vartriangleleft x)\vartriangleleft y,
\end{eqnarray*}

We define the cochain complex $(C^\bullet(\mathfrak{g},V),\delta_\bullet)$ where $C^p(\mathfrak{g},V)$ denotes the vector space of $p$-linear maps $f:\mathfrak{g}\times\cdots\times\mathfrak{g}\to V$ for $p\in \mathbb{Z}^+$ ,  $C^0(\mathfrak{g},V):=V$, and $\delta_p:C^p(\mathfrak{g},V)\to C^{p+1}(\mathfrak{g},V)$ is the \emph{Nijenhuis differential}
$$(\delta_p f)(x_0,\cdots,x_p) = \sum_{i=0}^{p-1}(-1)^ix_i\vartriangleright f(x_0,\cdots,\hat{x_i},\cdots,x_p)+\sum_{i=0}^{p-1}(-1)^if(x_0,\cdots,\hat{x_i},\cdots,x_{p-1},x_i)\vartriangleleft x_p$$
$$-\sum_{i<j<p}(-1)^{i+j+1} f(x_i\ast x_j-x_j\ast x_i,\cdots,\hat{x_i},\cdots,\hat{x_j},\cdots,x_p)-\sum_{i=0}^{p-1}(-1)^if(x_0,\cdots,\hat{x_i},\cdots,x_{p-1},x_i\ast x_p).$$\\

It follows that $\delta_p\circ\delta_{p-1}=0$, establishing the structure of a complex.\\

As is standard, we define the spaces of \emph{k-cocycles} and \emph{k-coboundaries} as $Z_{SG}^k(\mathfrak{g},V):=\textnormal{Ker}(\delta_k)$ and $B_{SG}^k(\mathfrak{g},V):=\textnormal{Im}(\delta_{k-1})$ (with $B_{SG}^0(\mathfrak{g},V):=\{ 0\}$), respectively. Therefore, the $k$-th \emph{Nijenhuis cohomology space} of the left-symmetric algebra $(\mathfrak g, \ast)$ with coefficients in $V$ is the quotient space

$$H_{SG}^k(\mathfrak{g},V):=Z_{SG}^k(\mathfrak{g},V)/B_{SG}^k(\mathfrak{g},V).$$

In the specific case where  $V=\mathbb{R}$ is endowed with the trivial $(\mathfrak{g},\ast)$-bimodule structure (i.e. $x\vartriangleright t=t\vartriangleleft x=0$), we referred to the $k$-th Nijenhuis cohomology space $H_{SG}^k(\mathfrak{g},\mathbb{R})$ as the $k$-th \emph{space of scalar cohomology} of the left-symmetric algebra $(\mathfrak{g},\ast)$. The reader is recommended to visit \cite{Dzhumadil,Nijenhuis,Remm} for details.

\subsection*{Hochschild cohomology for associative algebras}

Let $(\mathfrak{g},\circ)$ be a real associative algebra and $V$ be a finite-dimensional real vector space. A $(\mathfrak{g},\circ)$-\emph{bimodule} structure on $V$ is defined by two bilinear maps $\vartriangleright:\mathfrak{g}\times V\to V$ and $\vartriangleleft:V\times \mathfrak{g}\to V$ satisfying the following compatibility conditions, for all $x,y\in\mathfrak{g}$ and $v\in V$,
\begin{equation*}
(x\circ y)\vartriangleright v=x\vartriangleright (y\vartriangleright v),\quad (x \vartriangleright v)\vartriangleleft y=x\vartriangleright(v\vartriangleleft y),\quad\textnormal{and}\quad (v\vartriangleleft x)\vartriangleleft y=v\vartriangleleft (x\circ y) \, .
\end{equation*}

We define the cochain complex $(C^\bullet(\mathfrak{g},V),\delta_\bullet)$ where the \emph{Hochschild differential} $\delta_p:C^p(\mathfrak{g},V)\to C^{p+1}(\mathfrak{g},V)$ is defined as \cite{Hochschild}
\begin{eqnarray*}
(\delta_p f)(x_0,\cdots,x_p) & = & x_0 \vartriangleright f(x_1, \cdots,x_p)+\sum_{i=0}^{p-1}(-1)^{i+1}f(x_0,\cdots, x_{i-1},x_i\circ x_{i+1},x_{i+2},\cdots,x_p)\\
&+&(-1)^{p+1}f(x_0,\cdots,x_{p-1})\vartriangleleft x_p.
\end{eqnarray*}

As in the case of Nijenhuis cohomology, we define the spaces of $k$-cocycles $Z_{AS}^k(\mathfrak{g},V):=\textnormal{Ker}(\delta_k)$ and $k$-coboundaries $B_{AS}^k(\mathfrak{g},V):=\textnormal{Im}(\delta_{k-1})$ (with $B_{AS}^0(\mathfrak{g},V)=\{ 0\}$ by convention). Thus, the $k$-th \emph{Hochschild cohomology space} of the associative algebra $(\mathfrak{g},\circ)$ with coefficients in V is the quotient space
$$H_{AS}^k(\mathfrak{g},V):=Z_{AS}^k(\mathfrak{g},V)/B_{AS}^k(\mathfrak{g},V).$$

In the special case where $V=\mathbb{R}$ is endowed with the trivial $(\mathfrak{g},\circ)$-bimodule structure, we denote the $k$-th Hochschild cohomology space $H_{AS}^k(\mathfrak{g},\mathbb{R})$ as the $k$-th \emph{space of scalar cohomology} of the associative algebra $(\mathfrak{g},\circ)$. See \cite{Harrison,Hochschild} for details.

\begin{remark}
Within this framework, the flat scalar product $\langle \cdot,\cdot \rangle$ of a weakly flat pseudo-Riemannian $F$-Lie algebra $(\mathfrak{g},\circ,\langle \cdot,\cdot \rangle)$ can be interpreted as a scalar 2-cocycle in $H^2_{AS}(\mathfrak{g},\mathbb{R})$.
\end{remark}

Let $H^1_{AS}(\mathfrak{g}, \mathfrak{g})_r$ denote the first Hochschild cohomology space of the associative algebra $(\mathfrak{g},\circ)$ with coefficients in $\mathfrak{g}$. This space is taken with respect to the $\mathfrak{g}$-bimodule structure on $\mathfrak{g}$ where the left action is trivial and the right action is induced by the right multiplication, i.e., $r_x(y)=y\circ x$ for all $x,y\in \mathfrak{g}$.

\begin{lemma}\label{related1-cocycle}
Let $(\mathfrak{g},\circ)$ be an associative algebra equipped with a scalar product $\langle \cdot,\cdot \rangle$ satisfying the Frobenius identity with respect to the product $\circ$. Then, the formula
$$\theta(x,y)=\langle v(x),y \rangle,\quad x,y\in \mathfrak{g},$$
where $\theta \in H^2_{AS}(\mathfrak{g},\mathbb{R})$ and $v \in H^1_{AS}(\mathfrak{g}, \mathfrak{g})_r$, induces a linear isomorphism between $H^2_{AS}(\mathfrak{g},\mathbb{R})$ and $H^1_{AS}(\mathfrak{g}, \mathfrak{g})_r$.
\end{lemma}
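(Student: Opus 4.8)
The plan is to produce the isomorphism concretely at the cochain level and then show it descends to cohomology. Since $\langle\cdot,\cdot\rangle$ is non-degenerate, for any bilinear form $\theta\in C^2_{AS}(\mathfrak g,\mathbb R)$ there is a unique linear map $v\colon\mathfrak g\to\mathfrak g$ with $\theta(x,y)=\langle v(x),y\rangle$; conversely every linear $v$ gives such a $\theta$. This already establishes a linear bijection between $C^2_{AS}(\mathfrak g,\mathbb R)$ and $C^1_{AS}(\mathfrak g,\mathfrak g)$. So the content of the lemma is: under this correspondence, cocycles match cocycles and coboundaries match coboundaries.

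First I would translate the Hochschild cocycle condition on $\theta$ (with trivial coefficients, so $(\delta_2\theta)(x,y,z)=-\theta(x\circ y,z)+\theta(x,y\circ z)-\theta(x,y)\cdot\text{(stuff that vanishes)}$; more precisely, with $V=\mathbb R$ trivial, $(\delta_2\theta)(x,y,z)=\theta(x\circ y,z)-\theta(x,y\circ z)$ up to sign) into a condition on $v$. Writing $\theta(x,y)=\langle v(x),y\rangle$ and using the Frobenius identity $\langle a\circ b,c\rangle=\langle a,b\circ c\rangle$ repeatedly, the equation $\theta(x\circ y,z)=\theta(x,y\circ z)$ becomes $\langle v(x\circ y),z\rangle=\langle v(x),y\circ z\rangle=\langle v(x)\circ y,z\rangle$ (the last step by Frobenius), hence $v(x\circ y)=v(x)\circ y=r_y(v(x))$ for all $x,y$. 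This is exactly the statement that $\delta_1 v=0$ in $C^\bullet(\mathfrak g,\mathfrak g)$ with the prescribed bimodule structure: left action trivial, right action $r_x(y)=y\circ x$, since there $(\delta_1 v)(x,y)=0\cdot f(y)-v(x\circ y)+v(x)\triangleleft y = v(x)\circ y - v(x\circ y)$. Next I would do the same for coboundaries: a $2$-coboundary $\theta=\delta_1\varphi$ for $\varphi\in C^1_{AS}(\mathfrak g,\mathbb R)=\mathfrak g^\ast$ means $\theta(x,y)=-\varphi(x\circ y)$ (again trivial coefficients kill the outer terms). Since $\langle\cdot,\cdot\rangle$ is non-degenerate, write $\varphi(\cdot)=\langle c,\cdot\rangle$ for a unique $c\in\mathfrak g$; then $\theta(x,y)=-\langle c,x\circ y\rangle=-\langle c\circ x,y\rangle$ by Frobenius, so the associated $v$ is $v(x)=-c\circ x=-r_x(c)$. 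On the other side, a $1$-coboundary in $C^\bullet(\mathfrak g,\mathfrak g)$ is $\delta_0 c$ for $c\in C^0=\mathfrak g$, and $(\delta_0 c)(x)=0 - c\triangleleft x=-c\circ x$ — matching exactly. Hence coboundaries correspond to coboundaries.

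Putting these together: the linear bijection $\theta\leftrightarrow v$ at the level of $2$-cochains versus $1$-cochains restricts to a bijection $Z^2_{AS}(\mathfrak g,\mathbb R)\leftrightarrow Z^1_{AS}(\mathfrak g,\mathfrak g)_r$ and carries $B^2_{AS}(\mathfrak g,\mathbb R)$ onto $B^1_{AS}(\mathfrak g,\mathfrak g)_r$, so it induces a well-defined linear isomorphism on the quotients $H^2_{AS}(\mathfrak g,\mathbb R)\cong H^1_{AS}(\mathfrak g,\mathfrak g)_r$. I would close by noting the inverse map is given by the same formula read backwards, so linearity and bijectivity are immediate.

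The computations are all routine once the sign conventions in the Hochschild differential are pinned down for the two different module structures (trivial $\mathbb R$ on one side, trivial-left/right-multiplication $\mathfrak g$ on the other). The main point to be careful about — the only genuine \emph{obstacle} — is making sure the Frobenius identity is invoked in the right places so that the "mixed" term $\langle v(x),y\circ z\rangle$ converts into $\langle v(x)\circ y,z\rangle$; this is what forces the coefficients on the $\mathfrak g$-side to be precisely the right-multiplication bimodule $r_x$ rather than the left one, and it is exactly the hypothesis that $\langle\cdot,\cdot\rangle$ satisfies the Frobenius identity that makes the correspondence work. A secondary check is that $\theta$ being symmetric (as a scalar product) is not needed for the cochain-level statement, so the isomorphism is really between the full Hochschild spaces, not some symmetric sub/quotient — worth a one-line remark but not an obstruction.
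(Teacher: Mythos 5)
Your proposal is correct and follows essentially the same route as the paper: use non-degeneracy of $\langle\cdot,\cdot\rangle$ to set up the cochain-level correspondence $\theta(x,y)=\langle v(x),y\rangle$, then use the Frobenius identity to translate the $2$-cocycle condition into $v(x\circ y)=v(x)\circ y$ and the $2$-coboundary condition $\theta(x,y)=t(x\circ y)$ (with $t=\langle x_0,\cdot\rangle$) into $v$ being a right-multiplication, i.e.\ a $1$-coboundary for the bimodule with trivial left action and right action $r_x$. Your additional bookkeeping of the Hochschild signs and the remark that the cochain-level bijection makes the inverse immediate are harmless refinements of the paper's argument, not a different method.
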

\begin{proof}
Suppose $\theta\in Z^2_{AS}(\mathfrak{g},\mathbb{R})$. Then $\theta(x\circ y,z)= \theta(x,y\circ z)$ if and only if $\langle v(x\circ y),z \rangle=\langle v(x)\circ y,z \rangle$, for all $x,y,z\in \mathfrak{g}$. Since $\langle \cdot,\cdot \rangle$ is non-degenerate, the last equation implies
 $v(x\circ y)=v(x)\circ y$,  which means $v\in Z^1_{AS}(\mathfrak{g}, \mathfrak{g})_r$.\\

Now, assume $\theta \in B^2_{AS}(\mathfrak{g},\mathbb{R})$, so that it can be written as $\theta(x,y)=t(x\circ y)$ for some linear map $t:\mathfrak{g}\to \mathbb{R}$. Due to the non-degeneracy of  $\langle \cdot,\cdot \rangle$, there exists a unique element $x_0\in \mathfrak{g}$ such that $t=\langle x_0,\cdot \rangle$. This leads to $\langle v(x),y \rangle=\langle x_0\circ x,y \rangle$ for all $x,y\in \mathfrak{g}$. Therefore, $v(x)=x_0\circ x$, implying $v\in B^1_{AS}(\mathfrak{g}, \mathfrak{g})_r$.\\

It follows directly that the map $[\theta]\mapsto [v]$ establishes a well-defined linear isomorphism between $H^2_{AS}(\mathfrak{g},\mathbb{R})$ and $H^1_{AS}(\mathfrak{g}, \mathfrak{g})_r$. The inverse isomorphism can be constructed using analogous arguments.\\
\end{proof}

Building upon the work of \cite{AubertMedina}, we can establish a similar framework to define a reduction process for weakly flat pseudo-Riemannian $F$-Lie algebras.  We recall that a subspace $I$ of $(\mathfrak{g},\langle \cdot,\cdot \rangle)$ is termed \emph{totally isotropic} if $I\subset I^\perp$.

\begin{lemma}[Reduction]\label{Reduction}
Let $(\mathfrak{g},\circ,\langle \cdot,\cdot \rangle)$ be a weakly flat pseudo-Riemannian $F$-Lie algebra. Suppose $I$ is a totally isotropic subspace of dimension 1 in $(\mathfrak{g},\langle \cdot,\cdot \rangle)$ that is simultaneously a two-sided ideal of both $(\mathfrak{g},\circ)$ and $(\mathfrak{g},\ast)$. Then the following statements hold:
\begin{enumerate}
\item[i.] the products $\circ$ and $\ast$ within $I$ are null,  $I\circ I^\perp= I\ast I^\perp=0$, and $I^\perp$ is a two-sided ideal in $(\mathfrak{g},\circ)$, but only a left ideal in $(\mathfrak{g},\ast)$;
\item[ii.] $I^\perp$ is a right ideal in $(\mathfrak{g},\ast)$ if and only if $ I^\perp\ast I=0$; and
\item[iii.] if $I^\perp$ is a right ideal in $(\mathfrak{g},\ast)$, then clearly the canonical sequence
\begin{equation}\label{sequence}
0\to I\to I^\perp \to I^\perp/I\to 0,
\end{equation}
is a central exact sequence of left-symmetric and associative commutative subalgebras. The quotient vector space $B= I^\perp/I$ inherits a canonical structure of a weakly flat pseudo-Riemannian $F$-Lie algebra.
\end{enumerate}
\end{lemma}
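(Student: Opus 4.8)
The plan is to exploit the interplay between the Frobenius identity, the metric-compatibility of $\ast$, and the fact that $I$ is one-dimensional and totally isotropic. Fix a nonzero $a \in I$, so $I = \mathbb{R}a$ and $\langle a, a\rangle = 0$. For item i., since $I$ is an ideal of $(\mathfrak{g},\circ)$ we have $a \circ a \in I = \mathbb{R}a$, say $a\circ a = \lambda a$; pairing with any $z$ via the Frobenius identity and associativity, $\langle a\circ a, z\rangle = \langle a, a\circ z\rangle$, while $a\circ z \in I$ forces $\langle a, a\circ z\rangle \in \langle a, I\rangle = 0$, hence $\langle a\circ a, z\rangle = 0$ for all $z$; non-degeneracy gives $a\circ a = 0$, so $\circ$ is null on $I$. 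The same kind of argument applied to $\ast$ using $\langle x\ast y, z\rangle = -\langle y, x\ast z\rangle$ and the ideal property of $I$ in $(\mathfrak{g},\ast)$ yields $a\ast a = 0$. For $I\circ I^\perp = 0$: if $w \in I^\perp$ then $a\circ w \in I = \mathbb{R}a$, and $\langle a\circ w, z\rangle = \langle a, w\circ z\rangle$; but I must instead pair on the other side: $\langle a\circ w, a\rangle = \langle w, a\circ a\rangle = 0$ using the already-established $a\circ a = 0$ together with commutativity, and writing $a\circ w = \mu a$ this reads $\mu\langle a,a\rangle = 0$, which is vacuous — so instead pair $\langle a \circ w, z\rangle = \langle a, w\circ z\rangle$ and note $w \circ z$ need not lie in $I$; the cleaner route is $\langle a\circ w, z\rangle = \langle w\circ a, z\rangle = \langle w, a\circ z\rangle$, and since $a\circ z \in I$, the right side is $\langle w, I\rangle \subseteq \langle I^\perp, I\rangle = 0$. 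Hence $a\circ w = 0$, i.e. $I\circ I^\perp = 0$; the identical computation with $\ast$ (and its metric-compatibility in the appropriate slot) gives $I\ast I^\perp = 0$. That $I^\perp$ is a two-sided $\circ$-ideal follows because $I^\perp = (\mathbb{R}a)^\perp$ and, by the Frobenius identity, $x \in I^\perp \Leftrightarrow \langle x, a\rangle = 0 \Rightarrow \langle x\circ y, a\rangle = \langle x, y\circ a\rangle \in \langle x, I\rangle$ — wait, that gives $\langle x, I\rangle$ which need not vanish, so rather: $\langle x\circ y, a\rangle = \langle x, y\circ a\rangle$ and $y\circ a \in I = \mathbb{R}a$, say $y\circ a = \nu a$, so $\langle x\circ y, a\rangle = \nu\langle x, a\rangle = 0$; by commutativity $y\circ x \in I^\perp$ too. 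For $\ast$: $x \in I^\perp$ gives $\langle x\ast y, a\rangle = -\langle y, x\ast a\rangle$ and $x\ast a \in I$, so $\langle x\ast y, a\rangle \in -\langle y, I\rangle$ — again this is not obviously zero, but since $x \ast a \in \mathbb{R}a$ and $a$ is isotropic, $\langle y, x\ast a\rangle$ is a multiple of $\langle y, a\rangle$... which is not controlled for general $y$. The fix: to show $I^\perp$ is a \emph{left} ideal of $(\mathfrak{g},\ast)$, take $x \in \mathfrak{g}$, $w \in I^\perp$; then $\langle x\ast w, a\rangle = -\langle w, x\ast a\rangle = 0$ because $x\ast a \in I = \mathbb{R}a \subseteq I^\perp$ and $w \in I^\perp$... no, $\langle w, I\rangle = 0$ requires $w \perp I$, which holds since $I$ is totally isotropic means $I \subseteq I^\perp$, hence $\langle I^\perp, I\rangle$ includes $\langle I, I\rangle = 0$ but also all of $\langle I^\perp, I\rangle$ which by definition of $I^\perp$ is zero. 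So indeed $\langle w, x\ast a\rangle \in \langle I^\perp, I\rangle = 0$, and $I^\perp$ is a left $\ast$-ideal. Whether it is a right $\ast$-ideal is precisely the condition $I^\perp \ast I = 0$ of item ii., proved the same way: $\langle w \ast x, a\rangle = -\langle x, w\ast a\rangle$, and this vanishes for all $x$ iff $w\ast a = 0$ (non-degeneracy), i.e. iff $I^\perp\ast I = 0$; combined with the left-ideal property this is equivalent to $I^\perp$ being a two-sided $\ast$-ideal.

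For item ii., the equivalence "$I^\perp$ is a right $\ast$-ideal $\iff I^\perp\ast I=0$" is immediate from the computation just sketched: one direction is trivial ($I \subseteq I^\perp$), and the converse uses that $I^\perp\ast I = 0$ together with the already-known $I^\perp\ast I^\perp \subseteq$ (need to check) — actually $I^\perp\ast I^\perp \subseteq I^\perp$ follows from the left-ideal property plus $\mathfrak{g} = I^\perp \oplus (\text{complement})$? No — better: for $w, w' \in I^\perp$, $\langle w\ast w', a\rangle = -\langle w', w\ast a\rangle$, and $w\ast a \in I^\perp\ast I = 0$ by hypothesis, so $\langle w\ast w', a\rangle = 0$, i.e. $w\ast w' \in I^\perp$. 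Thus under the hypothesis $I^\perp$ is closed under $\ast$ on both sides.

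For item iii., assume $I^\perp\ast I = 0$. By item i. and ii., $I^\perp$ is a subalgebra for both $\circ$ and $\ast$, and $I$ is a two-sided ideal inside it for both products, with all products into $I$ from $I^\perp$ being zero (this is what "central" means here, and it is exactly $I\circ I^\perp = I\ast I^\perp = I^\perp\ast I = 0$ plus $I\circ I = I\ast I = 0$). Hence \eqref{sequence} is a central extension of left-symmetric and associative-commutative algebras. The metric $\langle\cdot,\cdot\rangle$ restricted to $I^\perp$ has kernel exactly $I$ (since $I = I^\perp{}^{\perp}$ when $\dim I = 1$ and $I$ is totally isotropic — indeed $I^{\perp\perp} = I$ always, and $\mathrm{rad}(\langle\cdot,\cdot\rangle|_{I^\perp}) = I^\perp \cap I^{\perp\perp} = I^\perp\cap I = I$), so it descends to a non-degenerate symmetric bilinear form $\overline{\langle\cdot,\cdot\rangle}$ on $B = I^\perp/I$. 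I then check that the induced products $\overline{\circ}$ and $\overline{\ast}$ are well-defined on $B$ (using that $I$ is a two-sided ideal with the vanishing properties), that $\overline{\circ}$ remains associative and commutative, that $\overline{\langle\cdot,\cdot\rangle}$ is flat (its Levi-Civita product is the one induced by $\ast$, which one verifies by pushing the Koszul-type formula through the quotient using metric-compatibility), that $\overline{\langle\cdot,\cdot\rangle}$ satisfies the Frobenius identity (descends directly from item i. of Definition \ref{mainDef3}), and that the Hertling-Manin relation passes to the quotient (it is a polynomial identity in $\circ$ and $[\cdot,\cdot]$, and the quotient maps are algebra homomorphisms for both, so it descends automatically). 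The result is a weakly flat pseudo-Riemannian $F$-Lie algebra structure on $B$; note we claim only the \emph{weak} version, since a unit for $\circ$ or a flat $e$ need not survive the quotient.

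The main obstacle is bookkeeping rather than conceptual: one must carefully track which of the four bilinear operations ($\circ$, $\ast$, hence $[\cdot,\cdot]$, and $\langle\cdot,\cdot\rangle$) lands where, repeatedly invoking the two compatibility identities (Frobenius for $\circ$, metric-compatibility for $\ast$) in the correct argument slot, and using the one-dimensionality and isotropy of $I$ to convert "$x \ast a$ is a multiple of $a$" into "$\langle y, x\ast a\rangle = 0$ whenever $y \in I^\perp$." The only place where a genuine hypothesis is needed — and cannot be derived from the rest — is the right-ideal property of $I^\perp$ under $\ast$, which is why it is isolated in item ii. and assumed in item iii. Checking that the Levi-Civita product of the quotient metric coincides with the induced $\ast$ (rather than merely being \emph{some} flat product) is the subtlest verification, and I would do it by evaluating the Koszul formula $2\langle \overline{x}\,\overline{\ast}\,\overline{y}, \overline{z}\rangle_B = \langle[\overline{x},\overline{y}],\overline{z}\rangle_B - \cdots$ on representatives and comparing term by term with the quotient of the corresponding identity on $I^\perp$.
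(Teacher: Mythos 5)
Your proof is correct and follows essentially the same route as the paper: the statements about $\circ$ are obtained exactly as in the paper's proof from the Frobenius identity, commutativity of $\circ$, total isotropy of $I$ and non-degeneracy of $\langle\cdot,\cdot\rangle$, and the quotient structure on $B=I^\perp/I$ is assembled in the same way. The only difference is that you prove the $\ast$-statements (null products, the left/right ideal dichotomy, and the identification of the induced product with the Levi--Civita product of the quotient metric via the Koszul formula) directly from metric compatibility, whereas the paper delegates these to the cited work of Aubert and Medina; your direct verifications are the standard arguments and are sound.
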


\begin{proof}
Since properties i. and ii. regarding the Levi-Civita product $\ast$ have already been proven  in \cite{AubertMedina}, we focus solely on assertions involving the product $\circ$. Let  $x\in I$, $y\in I^\perp$ and $z\in\mathfrak{g}$.  The Frobenius identity and commutativity of $\circ$ ensure
$$\langle x\circ y ,z \rangle=\langle y\circ x ,z \rangle=\langle y,x\circ z \rangle=0,$$
since $x\circ z= z\circ x\in I$, implying $I\circ I^\perp=I^\perp \circ I=0$.  In particular, $I\circ I=0$.  Furthermore,
$$\langle z\circ y ,x \rangle=\langle y\circ z ,x \rangle=\langle y,z\circ x \rangle=0,$$
showing that $\mathfrak{g}\circ I^\perp=I^\perp\circ \mathfrak{g}\subset I^\perp$.  Hence,  $I^\perp$ is also a two-sided ideal of $(\mathfrak{g},\circ)$.\\

iii. Let us assume $I^\perp$  is a right ideal of $(\mathfrak{g},\ast)$. We denote elements of the quotient vector space $B=I^\perp/I$ by $\overline{x}:=x+I$. Based on the preceding results, $B$ inherits an associative and commutative product  $\underline{\circ}$ defined by $\overline{x} \underline{\circ} \overline{y}=x\circ y + I=\overline{x\circ y}$ and a left-symmetric product $\underline{\ast}$ defined $\overline{x} \underline{\ast} \overline{y}=x\ast y + I=\overline{x\ast y}$. Additionally, $B$ possesses a canonical Lie algebra structure given by
$$[\overline{x},\overline{y}]=\overline{x} \underline{\ast} \overline{y}-\overline{y} \underline{\ast} \overline{x}=(x\ast y-y\ast x)+I.$$

In particular, we get the canonical central exact sequence of left-symmetric and associative commutative subalgebras \eqref{sequence}. Note that the scalar product $\langle \cdot,\cdot \rangle$ induces a bilinear symmetric form on $I^\perp$ with kernel $I$ due to its totally isotropic nature. Consequently, restricting  $\langle \cdot,\cdot \rangle$  to $I^\perp\times I^\perp$ and passing to the quotient defines a flat scalar product  $\langle \cdot,\cdot \rangle'$ on $B$, explicitly given by $\langle \overline{x},\overline{y} \rangle'=\langle x,y\rangle$ for all $x,y\in I^\perp$. Reference \cite{AubertMedina} establishes that $\underline{\ast}$ coincides with the Levi-Civita product associated with $\langle \cdot,\cdot \rangle'$. Furthermore, we obtain
$$\langle \overline{x}\underline{\circ }\overline{y}, \overline{z}\rangle'= \langle\overline{x\circ y}, \overline{z} \rangle'=\langle x\circ y,z \rangle=\langle x,y\circ z \rangle =\langle\overline{x}, \overline{y\circ z} \rangle'= \langle \overline{x}, \overline{y}\underline{\circ }\overline{z}\rangle'.$$

Finally, similar arguments can be employed to verify the Hertling-Manin relation (Definition \ref{mainDef3}, iii.). This establishes a weakly flat pseudo-Riemannian F-Lie algebra $(B,\underline{\circ},\langle \cdot,\cdot \rangle')$.
\end{proof}

We shall refer to the quadruple $(B,\underline{\circ},\langle \cdot,\cdot \rangle')$ as the \emph{weakly flat pseudo-Riemannian F-reduction} of  $(\mathfrak{g},\circ,\langle \cdot,\cdot \rangle)$ with respect to the ideal $I$.\\

\begin{remark}
 In the case where $(\mathfrak{g},\circ,\langle \cdot,\cdot \rangle,e)$ is a flat pseudo-Riemannian $F$-Lie algebra, the quotient $B$ cannot inherit a canonical flat unit element with respect to the products $(\underline{\ast},\underline{\circ})$. This is because the element $e$ cannot belong to $I^\perp$.  If it were, we would have the contradiction $a\circ e=e \circ a=a\neq 0$, while $I\circ I^{\perp}=0$.  However, as shown in Proposition \ref{FlatUniryExtension}, we can still recover a flat unit element from the reduced weakly flat pseudo-Riemannian $F$-Lie algebra  $(B,\underline{\circ},\langle \cdot,\cdot \rangle')$ through a specific procedure.
\end{remark}

Let $(\mathfrak{g},\circ,\langle \cdot,\cdot \rangle)$ be a weakly flat pseudo-Riemannian $F$-Lie algebra satisfying the hypotheses of Lemma \ref{Reduction}. Then, according to the Nijenhuis cohomology (for left symmetric algebras) and the Hochschild cohomology (for associative algebras), the sequence \eqref{sequence} is described by the cohomology classes of a pair of scalar 2-cocycles $f\in Z^2_{SG}(\mathfrak{g},\mathbb{R})$ and $\theta\in Z^2_{AS}(\mathfrak{g},\mathbb{R})$ of the underlying left symmetric and associative commutative algebra $B$, respectively. Furthermore, if we set $I=\mathbb{R}a$ and identify $I^\perp$ with $\mathbb{R}a\oplus B$, the associative product $\circ$ and the left symmetric product $\ast$ on $I^\perp$, for all $x,y\in B$ and $\lambda,\nu \in\mathbb{R}$, are given by
$$(\lambda a+x)\circ (\nu a+y)=\theta(x,y)a+x\underline{\circ} y\qquad\textnormal{and}\qquad (\lambda a+x)\ast (\nu a+y)=f(x,y)a+x\underline{\ast} y.$$

Note that the commutativity of $\circ$ and $\underline{\circ}$ forces $\theta$ to be symmetric. By Lemma \ref{related1-cocycle}, the cohomology class of $\theta$ is determined by the cohomology class of a 1-cocycle $v\in Z^1_{AS}(B,B)_r$ (of the associative algebra $(B,\underline{\circ})$). This relationship is expressed by the formula $\theta(x,y)=\langle v(x),y \rangle'$. It follows that $\theta$ is symmetric if and only if $v$ is self-adjoint with respect to the scalar product $\langle \cdot,\cdot \rangle'$. In other words, $v^\ast=v$, where $v^\ast:B\to B$ denotes the adjoint map of $v:B\to B$  with respect to $\langle \cdot,\cdot \rangle'$.\\

On the other hand, following \cite{AubertMedina},  the cohomology class of $f$ is determined by the cohomology class of a 1-cocycle $u\in Z^1_{CE}(B,B)_L$ of the Lie algebra $B$ under the Lie algebra representation $L:B\to \mathfrak{gl}(B)$ defined by $L_x(y)=x\underline{\ast}y$ for all $x,y\in B$. This 1-cocycle is related to $f$ through the formula $f(x,y)=\langle u(x),y \rangle'$.\\

In conclusion:
\begin{lemma}\label{CentralCocycles}
The associative commutative product and the left symmetric product on $I^\perp=\mathbb{R}a\oplus B$ are respectively given by
$$(\lambda a+x)\circ (\nu a+y)=\langle v(x),y \rangle'a+x\underline{\circ} y\qquad\textnormal{and}\qquad (\lambda a+x)\ast (\nu a+y)=\langle u(x),y \rangle'a+x\underline{\ast} y,$$
for all $x,y\in B$ and $\lambda,\nu \in\mathbb{R}$, where $v:B\to B$ is a self-adjoint 1-cocycle in $Z^1_{AS}(B,B)_r$ of the associative commutative algebra $(B,\underline{\circ},\langle \cdot,\cdot \rangle')$ and $u:B\to B$ is a 1-cocycle in $Z^1_{CE}(B,B)_L$ of the Lie algebra $B$.
\end{lemma}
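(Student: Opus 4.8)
The statement is in essence a synthesis of the analysis carried out in the paragraphs immediately preceding it, so the plan is to organize those observations into a clean argument resting on Lemmas~\ref{Reduction} and~\ref{related1-cocycle} together with the left-symmetric analogue from~\cite{AubertMedina}. First I would invoke Lemma~\ref{Reduction}(iii): under the standing hypotheses, $B=I^\perp/I$ carries an associative commutative product $\underline{\circ}$, a left-symmetric product $\underline{\ast}$ (the Levi-Civita product of $\langle\cdot,\cdot\rangle'$), and a flat scalar product $\langle\cdot,\cdot\rangle'$ satisfying the Frobenius identity with respect to $\underline{\circ}$, and \eqref{sequence} is a central exact sequence simultaneously in the categories of associative commutative and of left-symmetric algebras. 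Writing $I=\mathbb{R}a$ and choosing a linear splitting $I^\perp\cong\mathbb{R}a\oplus B$, the standard description of central extensions through the cohomology theories recalled above yields scalar $2$-cocycles $\theta\in Z^2_{AS}(B,\mathbb{R})$ and $f\in Z^2_{SG}(B,\mathbb{R})$ such that $(\lambda a+x)\circ(\nu a+y)=\theta(x,y)a+x\underline{\circ}y$ and $(\lambda a+x)\ast(\nu a+y)=f(x,y)a+x\underline{\ast}y$; commutativity of $\circ$ forces $\theta(x,y)=\theta(y,x)$.

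Next I would feed the triple $(B,\underline{\circ},\langle\cdot,\cdot\rangle')$ into Lemma~\ref{related1-cocycle}. Since $\langle\cdot,\cdot\rangle'$ is nondegenerate and satisfies the Frobenius identity with respect to $\underline{\circ}$, there is a unique $v\in Z^1_{AS}(B,B)_r$ with $\theta(x,y)=\langle v(x),y\rangle'$, and $[\theta]\leftrightarrow[v]$ under the isomorphism $H^2_{AS}(B,\mathbb{R})\cong H^1_{AS}(B,B)_r$. To obtain self-adjointness, I would unwind the symmetry of $\theta$: for all $x,y\in B$ one has $\langle v(x),y\rangle'=\theta(x,y)=\theta(y,x)=\langle v(y),x\rangle'=\langle x,v(y)\rangle'=\langle v^\ast(x),y\rangle'$, so $v^\ast=v$ by nondegeneracy; conversely a self-adjoint $v$ produces a symmetric $\theta$, so that the symmetric scalar $2$-cocycles correspond exactly to the self-adjoint $1$-cocycles.

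For the left-symmetric part I would appeal to the parallel construction in~\cite{AubertMedina}: because $\underline{\ast}$ is the Levi-Civita product of $\langle\cdot,\cdot\rangle'$, the class of $f$ is represented by a Chevalley-Eilenberg $1$-cocycle $u\in Z^1_{CE}(B,B)_L$ of the Lie algebra $B$ for the representation $L_x(y)=x\underline{\ast}y$, related to $f$ by $f(x,y)=\langle u(x),y\rangle'$. Substituting the two identifications $\theta(x,y)=\langle v(x),y\rangle'$ and $f(x,y)=\langle u(x),y\rangle'$ back into the displayed formulas for $\circ$ and $\ast$ on $I^\perp$ produces the asserted expressions, which completes the proof.

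The point I would treat most carefully is the dictionary between cocycle conditions and the constraints actually imposed by the algebraic structure of $I^\perp$. Lemma~\ref{related1-cocycle} already matches $Z^2_{AS}(B,\mathbb{R})$ with $Z^1_{AS}(B,B)_r$, but one should check explicitly that commutativity of $\circ$ on $I^\perp$ imposes nothing on $\theta$ beyond symmetry (equivalently $v=v^\ast$), and --- on the $\ast$-side --- that the metric-compatibility $\langle x\ast y,z\rangle+\langle y,x\ast z\rangle=0$ together with the left-symmetric identity on $I^\perp$ is exactly encoded by $u$ being an $L$-cocycle, which is precisely the content of the corresponding verification in~\cite{AubertMedina}. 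Once these compatibilities are in hand, the rest is routine bookkeeping with the extension formulas.
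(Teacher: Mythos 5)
Your proposal is correct and follows essentially the same route as the paper: the central exact sequence from Lemma \ref{Reduction} is encoded by scalar $2$-cocycles $\theta\in Z^2_{AS}(B,\mathbb{R})$ and $f\in Z^2_{SG}(B,\mathbb{R})$, Lemma \ref{related1-cocycle} converts $\theta$ into a self-adjoint $v\in Z^1_{AS}(B,B)_r$ (self-adjointness coming from the symmetry of $\theta$ forced by commutativity), and the identification $f(x,y)=\langle u(x),y\rangle'$ with $u\in Z^1_{CE}(B,B)_L$ is taken from \cite{AubertMedina}. Your explicit unwinding of $v^\ast=v$ and the remarks on the metric-compatibility dictionary only make explicit what the paper leaves implicit.
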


Let $s:B\to B$ be a linear map. We will denote its adjoint with respect to the scalar product $\langle \cdot,\cdot \rangle'$ by $s^\ast: B\to B$. Consider the line $\mathbb{R}d$ in $\mathfrak{g}$ that is dual to $I=\mathbb{R}a$, such that the hyperplane  $\mathbb{R}(a,d)$ is orthogonal to $B$. In other words, we have $\langle a,d\rangle=1$ and $\mathbb{R}(a,d)\perp B$. We can then identify $\mathfrak{g}$ with the direct sum $\mathbb{R}a\oplus B\oplus \mathbb{R}d$. Following \cite{AubertMedina}, we can rewrite the Levi--Civita product $\ast$ on $\mathfrak{g}$ as follows. Since $I$ and $I^\perp$ are two-sided ideals of $(\mathfrak{g},\ast)$, they are also Lie algebra ideals of $\mathfrak{g}$. Consequently, the Lie bracket on $\mathfrak{g}$ takes the following form
\begin{eqnarray}\label{LieBracketExtension}
	&  & [d,a]=\mu a\nonumber\\
	&  & [d,x]=-\langle b_0,x\rangle'a+D(x)\\
	&  & [x,y]=\langle(u-u^*)(x),y\rangle'a+[x,y]_B\nonumber,
\end{eqnarray}
where $b_0,x,y\in B$, $\mu\in \mathbb{R}$, $D\in\mathfrak{gl}(B)$.\\

Furthermore, the Levi--Civita product on $\mathfrak{g}$ is rewritten as
\begin{eqnarray}\label{Levi--CivitaProductExtension}
	&  & a\ast a=a\ast x=x\ast a=0\nonumber\\
	&  & x\ast y=\langle u(x),y\rangle' a+x\underline{\ast}y\nonumber\\
	&  & a\ast d=0\nonumber\\
	&  & d\ast a=\mu a\\
	&  & d\ast d=b_0-\mu d\nonumber\\
	&  & d\ast x=-\langle b_0,x\rangle'a+(D-u)(x)\nonumber\\
	&  & x\ast d=-u(x)\nonumber,
\end{eqnarray}
where $x,y \in B$, and $D\in \textnormal{Der}(B)$ is a Lie algebra derivation satisfying $(D-u)^\ast=-(D-u)$. Additionally, the following compatibility conditions must hold
\begin{equation}\label{Eq1}
[D,u]=u^2-\mu u-R_{b_0},
\end{equation}
where $R_{b_0}:B\to B$ is the linear map defined by $R_{b_0}(x)=x\underline{\ast} b_0$ for all $x\in B$, and, for all $x,y\in B$,
\begin{equation}\label{Eq2}
x\underline{\ast}u(y)-u(x\underline{\ast}y)=D(x)\underline{\ast}y+x\underline{\ast}D(y)-D(x\underline{\ast}y).
\end{equation}

For a detailed exposition, refer to \cite{AubertMedina}.\\

We now aim to ``decompose" the remaining structure on the weakly flat pseudo-Riemannian $F$-Lie algebra $(\mathfrak{g},\circ,\langle \cdot,\cdot\rangle)$. By Lemmas \ref{Reduction} and \ref{CentralCocycles}, the commutative product $\circ$ can be initially rewritten as follows
\begin{eqnarray*}
	&  & a\circ a=a\circ x=x\circ a=0\\
	&  & x\circ y=\langle v(x),y\rangle' a+x\underline{\circ}y\\
	&  & a\circ d=\lambda a\\
	&  & d\circ d=\beta a+a_0+\gamma d\\
	&  & d\circ x=\omega a+t(x),
\end{eqnarray*}
where $\lambda,\beta,\gamma,\omega\in \mathbb{R}$, $a_0, x, y \in B$ and $t\in \mathfrak{gl}(B)$. A straightforward computation reveals that  $\circ$ satisfies the Frobenius identity if and only if
$$t=v, \quad \lambda=\gamma,\quad\textnormal{and}\qquad \omega=\langle a_0,x\rangle'.$$

Furthermore, associativity of $\circ$ holds if and only if
\begin{equation}\label{Eq3}
v^2=\lambda v+r_{a_0},
\end{equation}
where $r_{a_0}:B\to B$ is the linear map defined by $r_{a_0}(x)=x\underline{\circ} a_0$ for all $x\in B$.\\

Therefore, we can express the product $\circ$ in its final form
\begin{eqnarray}\label{AssociativeCommutativeProductExtension}
	&  & a\circ a=a\circ x=x\circ a=0 \nonumber\\
	&  & x\circ y=\langle v(x),y\rangle' a+x\underline{\circ}y \nonumber\\
	&  & a\circ d=\lambda a\\
	&  & d\circ d=\beta a+a_0+\lambda d \nonumber\\
	&  & d\circ x=\langle a_0,x\rangle' a+v(x).\nonumber
\end{eqnarray}

We now turn our attention to the Hertling-Manin relation. We define a multilinear map:
\begin{eqnarray*}
	& HM(x,y,z,w)=  & [x\circ y,z\circ w]-[x\circ y,z]\circ w-[x\circ y,w]\circ z -x\circ [y,z\circ w]+x\circ [y,z]\circ w\\
	&  & + x\circ [y,w]\circ z-y\circ [x,z\circ w]+y\circ [x,z]\circ w+ y\circ [x,w]\circ z
\end{eqnarray*}
for all $x,y,z,w\in \mathfrak{g}$. We aim to determine the conditions under which $HM(x,y,z,w)=0$ holds for all quadruples $(x,y,z,w)$ of elements in  $\mathbb{R}a\oplus B\oplus \mathbb{R}d$, given the expressions for the Lie bracket \eqref{LieBracketExtension} and the associative commutative product \eqref{AssociativeCommutativeProductExtension}. Through lengthy, yet ultimately straightforward computations, we arrive at the following necessary algebraic conditions:
\begin{lemma}
If at least one of the components of the fourtuple $(x,y,z,w)$ equals ``$a$'', then $HM(x,y,z,w)=0$ trivially.
\end{lemma}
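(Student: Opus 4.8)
The plan is to show that the multilinear map $HM$ vanishes whenever one of its four arguments is the distinguished vector $a$, using only the structural formulas \eqref{LieBracketExtension} and \eqref{AssociativeCommutativeProductExtension}. The key observations are that $a$ is central for $\circ$ (since $a\circ a=a\circ x=x\circ a=0$ and $a\circ d=\lambda a$ lands in $\mathbb{R}a$), and that the only non-trivial bracket involving $a$ is $[d,a]=\mu a$, so $a$ is ``almost central'' for the Lie bracket as well, with the image of any such bracket again lying in $\mathbb{R}a$. Since $\mathbb{R}a$ is a totally isotropic two-sided ideal of $(\mathfrak{g},\circ)$ with $\mathbb{R}a\circ\mathfrak{g}\subseteq\mathbb{R}a$ and $\mathbb{R}a\circ\mathbb{R}a=0$, any term in the expansion of $HM$ that contains $a$ inside a $\circ$-product on which another $\circ$ is subsequently applied will vanish outright; the surviving terms are precisely the brackets, and these must be shown to cancel in pairs by the symmetry properties of $HM$.

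First I would exploit the built-in symmetries of $HM$ recorded earlier in the paper: $HM$ is symmetric in its first two arguments $x,y$, symmetric in its last two arguments $z,w$, and skew-symmetric under the exchange of the pair $(x,y)$ with the pair $(z,w)$. Hence it suffices to treat two cases: $a$ occupying a ``left'' slot (say $x=a$, the case $y=a$ being identical) and $a$ occupying a ``right'' slot (say $w=a$, the case $z=a$ identical); moreover, the left-slot case follows from the right-slot case by the skew-symmetry $HM(a,y,z,w)=-HM(z,w,a,y)$. So the whole lemma reduces to checking $HM(x,y,z,a)=0$ for arbitrary $x,y,z\in\mathfrak{g}$.

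For the case $w=a$, I would substitute $w=a$ into the nine-term expression for $HM$ and simplify term by term. The terms $[x\circ y,z]\circ a$, $x\circ[y,z]\circ a$, and $y\circ[x,z]\circ a$ vanish immediately because $(\text{anything})\circ a=0$ (indeed $\mathfrak{g}\circ a\subseteq\mathbb{R}a$ and then $\mathbb{R}a\circ\mathfrak{g}$ would already be zero, but here the outer $\circ a$ kills it directly since $u\circ a = 0$ for all $u$). Next, $[x\circ y, z\circ a]$: here $z\circ a\in\mathbb{R}a$ (it is $0$ unless $z$ has a $d$-component, in which case it equals $\lambda$ times the $d$-coefficient times $a$, actually it is $0$ since $z\circ a = 0$ when $z\in\{a,x\}$ and $d\circ a=\lambda a$), so this is a bracket with something in $\mathbb{R}a$, landing in $\mathbb{R}a$; similarly $x\circ[y,z\circ a]$ and $y\circ[x,z\circ a]$: the inner bracket lands in $\mathbb{R}a$, and then $x\circ(\mathbb{R}a)=0$, so these two vanish. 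This leaves $[x\circ y, z\circ a] - x\circ[y,z]\circ a - \ldots$; after discarding zeros one is left with a short combination of the form $[x\circ y, z\circ a] + x\circ[y,a]\circ z + y\circ[x,a]\circ z + x\circ[y,w]\circ z|_{w=a}$ — I would carefully re-expand using $[u,a]\in\mathbb{R}a$ and $(\mathbb{R}a)\circ z=0$, so that every remaining term either is a bracket valued in $\mathbb{R}a$ that pairs with an equal term of opposite sign, or is killed by a $\circ$ applied to an element of $\mathbb{R}a$. The bookkeeping shows the residual terms cancel identically.

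The main obstacle I anticipate is purely organizational rather than conceptual: one must keep precise track of which of the nine signed terms survive after the repeated use of ``$\circ$ annihilates $\mathbb{R}a$'' and ``$[\cdot,\cdot]$ maps into $\mathbb{R}a$ when $a$ is an argument,'' and then verify that the few survivors are genuinely a sum of cancelling pairs rather than leaving a spurious nonzero multiple of $a$ (e.g.\ a leftover term involving $\mu$, $\lambda$, or $\langle a_0,\cdot\rangle'$). I would handle this by organizing the computation around the grading $\mathfrak{g}=\mathbb{R}a\oplus B\oplus\mathbb{R}d$ and noting that every operation involving $a$ outputs something in $\mathbb{R}a$, so $HM(\cdot,\cdot,\cdot,\cdot)$ with an $a$-argument is automatically $\mathbb{R}a$-valued; the claim then follows from checking that the scalar coefficient of $a$ vanishes, which is a single short linear identity in the parameters. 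Reducing to this one scalar check, via the symmetries above, is the cleanest route and avoids the nine-term brute force almost entirely.
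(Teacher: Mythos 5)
Your overall strategy---use the symmetries of $HM$ to reduce to the case where $a$ occupies one fixed slot, observe that every term is then $\mathbb{R}a$-valued, and check that the scalar coefficient of $a$ cancels---is sound, and it is essentially a more detailed version of the paper's own argument, which rests on the observations $[\mathfrak{g},\mathfrak{g}]\subseteq\mathbb{R}a\oplus B$, $[a,\mathbb{R}a\oplus B]=0$, $a\circ(\mathbb{R}a\oplus B)=0$ and $a\circ d=\lambda a$. However, your bookkeeping as written contains false steps. You assert ``$u\circ a=0$ for all $u$'' (and later ``$x\circ(\mathbb{R}a)=0$'', ``$(\mathbb{R}a)\circ z=0$''), which contradicts $d\circ a=\lambda a$, a relation you yourself record at the outset. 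The terms $[x\circ y,z]\circ a$, $x\circ[y,z]\circ a$ and $y\circ[x,z]\circ a$ do vanish, but for the reason the paper isolates and you never state: brackets have no $d$-component (i.e.\ $[\mathfrak{g},\mathfrak{g}]\subseteq\mathbb{R}a\oplus B$, read off from \eqref{LieBracketExtension}), and $(\mathbb{R}a\oplus B)\circ a=0$.

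The more serious error is the claim that $x\circ[y,z\circ a]$ and $y\circ[x,z\circ a]$ vanish. They do not: for the quadruple $(d,d,d,a)$ one has $d\circ[d,d\circ a]=\lambda\mu\,(d\circ a)=\lambda^{2}\mu\,a$, so each of these two terms contributes $-\lambda^{2}\mu\,a$. These contributions are precisely what cancel the nonzero terms you do keep, namely $[d\circ d,d\circ a]=\lambda^{2}\mu\,a$, $-[d\circ d,a]\circ d=-\lambda^{2}\mu\,a$, and $d\circ[d,a]\circ d=+\lambda^{2}\mu\,a$ (occurring twice); the full sum is $(1-1-1-1+1+1)\lambda^{2}\mu\,a=0$, whereas after your discards the residue is $2\lambda^{2}\mu\,a$, which is nonzero in general---no constraint such as $\lambda\mu=0$ may be assumed at this point, since that relation only appears later in the $F$-strong symmetric and Poisson variants. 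So the plan closes only after you restore those two terms and actually carry out the cancellation you defer to ``the bookkeeping''; as proposed, the argument fails exactly on the $\lambda^{2}\mu$ terms.
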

\begin{proof}

It follows directly from \eqref{LieBracketExtension} that the derived ideal $[\mathfrak{g},\mathfrak{g}]$ coincides with $\mathbb{R}a\oplus B$. Consequently, our assertion follows from the observations that  $[a,\mathbb{R}a\oplus B]=0$ and $a\circ (\mathbb{R}a\oplus B)=0$ along with $a\circ d=\lambda a$.
\end{proof}

We now present the necessary and sufficient conditions for the remaining cases, obtained by evaluating the Hertling-Manin tensor for specific quadruples.  For each case, we derive equations that must be satisfied by the parameters (structure coefficients) appearing also in \eqref{LieBracketExtension} and \eqref{AssociativeCommutativeProductExtension}.

\begin{itemize}
\item $HM(d,d,d,x)=0$ for all $x\in B$ if and only if

\begin{eqnarray}\label{eq1}
& &-\lambda\mu a_0+\lambda v(b_0)+v((u-u^\ast)(a_0))+v(D(a_0))+\lambda(u-u^\ast)(a_0)-\textnormal{ad}^\ast_{a_0}(a_0)-\lambda^2b_0\nonumber\\
& & \qquad+\lambda D^\ast(a_0)-2v(D^\ast(a_0))+2D^\ast(v(a_0))=0,
\end{eqnarray}
and 
\begin{equation}\label{eq2}
[\textnormal{ad}_{a_0},v]+\lambda [D,v]+2[v,vD]+r_{D(a_0)}=0.
\end{equation}\\

These equations also hold for any permutation of the quadruple $(d,d,d,x)$.\\

\item $HM(x,y,d,d)=0$ for all $x,y\in B$ if and only if

\begin{eqnarray}\label{eq3}
& & \lambda \mu v-r_{(u-u^\ast)(a_0)}-\lambda r_{b_0}+2r_{D^\ast(a_0)}-\lambda D^\ast v-2D^\ast r_{a_0}+\textnormal{ad}^\ast_{a_0}v\nonumber\\
& &\qquad+v\textnormal{ad}_{a_0}-\lambda vD-2r_{a_0}D=0,
\end{eqnarray}
and
\begin{equation}\label{eq4}
 [x\underline{\circ}y,a_0]_B-x\underline{\circ}[y,a_0]_B-y\underline{\circ}[x,a_0]_B+(2v-\lambda\textnormal{id}_B)(D(x\underline{\circ} y)-x\underline{\circ}D(y)-y\underline{\circ}D(x))=0. 
\end{equation}\\

Additionally, $HM(x,d,y,d)=0$ if and only if

\begin{eqnarray}\label{eq5}
& & v(u-u^\ast+D-D^\ast)v+\lambda [v,u-u^\ast]+\lambda(D^\ast v-vD)+D^\ast r_{a_0}-r_{a_0}D\nonumber\\
& & \qquad +\lambda^2(u-u^\ast)+\lambda\tilde{\textnormal{ad}}_{a_0}-(\tilde{\textnormal{ad}}_{a_0}v+v\tilde{\textnormal{ad}}_{a_0})+\tilde{\textnormal{ad}}_{v(a_0)}=0,
\end{eqnarray}
and
\begin{eqnarray}\label{eq6}
& & v(v([x,y]_B)-[v(x),y]_B-[v(y),x]_B)+v(x\underline{\circ}D(y)-y\underline{\circ}D(x))\nonumber\\
& &\qquad +D(v(x))\underline{\circ} y-D(v(y))\underline{\circ} x+[v(x),v(y)]_B=0.
\end{eqnarray}\\

Here, $\tilde{\textnormal{ad}}_{s}:B\to B$ denotes the linear map defined by $\tilde{\textnormal{ad}}_{s}(x)=\textnormal{ad}^\ast_x(s)$ for all $s, x\in B$.\\

The same equations are obtained for any permutation of the quadruple $(x,y,d,d)$.\\

\item $HM(x,y,z,d)=0$ for all $x,y,z\in B$ if and only if

\begin{eqnarray}\label{eq7}
& & (v-\lambda\textnormal{id}_B)((u-u^\ast)(x\underline{\circ} y))+v(D(x\underline{\circ} y)-x\underline{\circ}D(y)-y\underline{\circ}D(x))\nonumber\\
& & \qquad+(\lambda\textnormal{id}_B-v)(\textnormal{ad}_x^\ast(v(y))+\textnormal{ad}_y^\ast(v(x)))-\textnormal{ad}_{x\underline{\circ}y}^\ast(a_0)+\textnormal{ad}_{x}^\ast(a_0\underline{\circ}y)+\textnormal{ad}_{y}^\ast(a_0\underline{\circ}x)=0,
\end{eqnarray}
and
\begin{eqnarray}\label{eq8}
& & [x\overline{\circ}y,v(z)]_B-x\overline{\circ}[y,v(z)]_B-y\overline{\circ}[x,v(z)]_B+D(x\overline{\circ}y)\overline{\circ}z-(x\overline{\circ}D(y))\overline{\circ}z\nonumber\\
& &\qquad -(y\overline{\circ}D(x))\overline{\circ}z-v([x\overline{\circ}y,z]_B-x\overline{\circ}[y,z]_B-y\overline{\circ}[x,z]_B)=0.
\end{eqnarray}\\

These equations hold for any permutation of the quadruple $(x,y,z,d)$.\\

\item Finally, $HM(x,y,z,w)=0$ for all $x,y,z,w\in B$ if and only if

\begin{eqnarray}\label{eq9}
& & (u-u^\ast)(x\underline{\circ}y)\underline{\circ}z-v([x\overline{\circ}y,z]_B-x\overline{\circ}[y,z]_B-y\overline{\circ}[x,z]_B)-\textnormal{ad}_{x\underline{\circ}y}^\ast(v(z))\nonumber\\
& &\qquad-\textnormal{ad}_{x}^\ast(v(y))\underline{\circ }z-\textnormal{ad}_{y}^\ast(v(x))\underline{\circ }z+\textnormal{ad}_{x}^\ast(y\underline{\circ }v(z))+\textnormal{ad}_{y}^\ast(x\underline{\circ }v(z))=0,
\end{eqnarray}\\

and the Hertling--Manin relation must hold for $(B,[\cdot,\cdot]_B,\underline{\circ })$.\\
\end{itemize}

In conclusion, we can express the key findings in the following proposition.\\

\begin{proposition}\label{PropReduction}
	Let  $(\mathfrak{g},\circ,\langle \cdot,\cdot\rangle)$ be a weakly flat pseudo-Riemannian $F$-Lie algebra. Suppose $I$ is a totally isotropic subspace of dimension 1 in $(\mathfrak{g},\langle \cdot,\cdot \rangle)$. We further assume that $I$ is a two-sided ideal of both $(\mathfrak{g},\circ)$ and $(\mathfrak{g},\ast)$ and $I^\perp$ is a right ideal of $(\mathfrak{g},\ast)$. If $B=I^{\perp}/I$ denotes the associated weakly flat pseudo-Riemannian $F$-reduction of $\mathfrak{g}$ with respect to the ideal $I$, then:

	\begin{enumerate}
	\item[i.] the Levi--Civita product $\ast$ of $\mathfrak{g}$ is given by \eqref{Levi--CivitaProductExtension}, and
	\item[ii.] the associative commutative product $\circ$ of $\mathfrak{g}$ is given by \eqref{AssociativeCommutativeProductExtension}.
	\end{enumerate}
   
Moreover, the parameters (structure coefficients), $\mu,\lambda,\beta\in \mathbb{R}$, $a_0,b_0\in B$, $u\in Z^1_{CE}(B,B)_L$, $D\in \textnormal{Der}(B)$, $v\in Z^1_{AS}(B,B)_r$ with $(D-u)^\ast=-(D-u)$ and $v^\ast=v$, must satisfy the algebraic equations \eqref{Eq1} to \eqref{Eq3} and \eqref{eq1} to \eqref{eq9}.\\
\end{proposition}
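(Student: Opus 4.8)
The strategy is essentially a ``bookkeeping-by-filtration'' argument: we exploit the fact that $I=\mathbb{R}a$ is a two-sided ideal for $\circ$, $\ast$, and $[\cdot,\cdot]$, that $I^\perp=\mathbb{R}a\oplus B$ carries the reduced structure, and that $\mathfrak{g}=\mathbb{R}a\oplus B\oplus\mathbb{R}d$ as a vector space with $\langle a,d\rangle'=1$, $\mathbb{R}(a,d)\perp B$. First I would note that parts i.\ and ii.\ have already been obtained in the paragraphs immediately preceding the statement: the form \eqref{Levi--CivitaProductExtension} of $\ast$ is quoted verbatim from \cite{AubertMedina} together with the constraints $(D-u)^\ast=-(D-u)$, $D\in\textnormal{Der}(B)$, equations \eqref{Eq1} and \eqref{Eq2}; and the form \eqref{AssociativeCommutativeProductExtension} of $\circ$, together with $v^\ast=v$ (equivalently, $\theta$ symmetric, via Lemma \ref{related1-cocycle}), $v\in Z^1_{AS}(B,B)_r$ (Lemma \ref{CentralCocycles}), and the associativity relation \eqref{Eq3}, were derived by imposing the Frobenius identity and associativity of $\circ$. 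So the genuine content of the proposition is the ``moreover'' clause, and more precisely the assertion that equations \eqref{eq1}--\eqref{eq9} are exactly what the Hertling--Manin relation contributes.

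The plan for the ``moreover'' clause is to evaluate the Hertling--Manin tensor $HM(x,y,z,w)$ on all basis-type quadruples from $\{a\}\cup B\cup\{d\}$ and collect the resulting constraints. The first reduction is the lemma already stated: if any slot is $a$, then $HM=0$ automatically because $[\mathfrak{g},\mathfrak{g}]=\mathbb{R}a\oplus B$, $[a,\mathbb{R}a\oplus B]=0$, and $a\circ(\mathbb{R}a\oplus B)=0$ with $a\circ d=\lambda a$. By the built-in symmetries of $HM$ (symmetric in the first pair, symmetric in the second pair, skew under swapping the pairs), the remaining cases are organized by how many slots equal $d$: the quadruples $(d,d,d,x)$, then $(x,y,d,d)$ and $(x,d,y,d)$, then $(x,y,z,d)$, then $(x,y,z,w)$ with all entries in $B$. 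For each configuration I substitute \eqref{LieBracketExtension} and \eqref{AssociativeCommutativeProductExtension}, expand, and separate the output into its $\mathbb{R}a$-component and its $B$-component; each component gives one equation. One must keep careful track of the identities already available—$v^\ast=v$, $(D-u)^\ast=-(D-u)$, the cocycle conditions for $u$ and $v$, and \eqref{Eq1}--\eqref{Eq3}—since several terms simplify or cancel only after invoking these; the remaining, irreducible relations are \eqref{eq1}--\eqref{eq9}, with the all-$B$ case additionally forcing the Hertling--Manin relation on $(B,[\cdot,\cdot]_B,\underline{\circ})$ as the part of $HM(x,y,z,w)$ that survives passage to the quotient. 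Conversely, since the five families of quadruples exhaust a spanning set and $HM$ is multilinear with the stated symmetries, vanishing on these quadruples forces $HM\equiv 0$, so the equations are also sufficient; combined with parts i.\ and ii.\ this shows $(\mathfrak{g},\circ,\langle\cdot,\cdot\rangle)$ is recovered precisely from the listed data satisfying the listed equations.

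The main obstacle is purely computational stamina and organization rather than any conceptual difficulty: the Hertling--Manin tensor is a nine-term expression, each $\circ$- or $[\cdot,\cdot]$-evaluation involving $d$ produces several summands (with the $\tilde{\textnormal{ad}}$, $\textnormal{ad}^\ast$, $r_{a_0}$, $R_{b_0}$ notation), and isolating the $\mathbb{R}a$- versus $B$-components while correctly using the compatibility relations to reach the stated normal forms \eqref{eq1}--\eqref{eq9} requires care. The cleanest way to control this is to first record, once and for all, the $\circ$- and bracket-actions of $d$ on $B$ and on $d$ (from \eqref{LieBracketExtension}, \eqref{AssociativeCommutativeProductExtension}) and the pairing identities $\langle\textnormal{ad}_x^\ast(s),y\rangle'=-\langle s,[x,y]_B\rangle'$, $\langle v(x),y\rangle'=\langle x,v(y)\rangle'$, then treat each quadruple-configuration as a self-contained sub-lemma. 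Since the paper explicitly flags these as ``lengthy, yet ultimately straightforward computations,'' the proof write-up can legitimately present one representative configuration in detail—say $HM(x,d,y,d)$, which yields the most structurally rich pair \eqref{eq5}--\eqref{eq6}—and indicate that the others follow by the same substitution-and-separation procedure.
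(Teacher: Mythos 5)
Your proposal is correct and follows essentially the same route as the paper, which indeed presents Proposition \ref{PropReduction} as a summary of the preceding derivation: the form of $\ast$ with \eqref{Eq1}--\eqref{Eq2} quoted from \cite{AubertMedina}, the form \eqref{AssociativeCommutativeProductExtension} with $v^\ast=v$ and \eqref{Eq3} extracted from the Frobenius identity and associativity, and \eqref{eq1}--\eqref{eq9} obtained exactly as you describe, by killing all quadruples containing $a$ via the preliminary lemma and then evaluating $HM$ case by case on quadruples sorted by the number of $d$-slots, splitting each result into its $\mathbb{R}a$- and $B$-components. Your added remark on sufficiency (vanishing on a spanning family of quadruples forces $HM\equiv 0$ by multilinearity) is not needed for the proposition itself, which is the necessity direction, but it is consistent with Theorem \ref{ThmDoubleExtension}.
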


Adding a flat unit to the structure yields the following proposition.

\begin{proposition}\label{FlatUniryExtension}
Under the same assumptions as Proposition \ref{PropReduction}, consider a flat pseudo-Riemannian F-Lie algebra  $(\mathfrak{g},\circ,\langle \cdot,\cdot\rangle,e)$ with a flat unit $e \in \mathfrak{g}$. Then, the flat unit $e$ can be expressed as 

\begin{equation}\label{eFlatUnity}
e=-\left( \frac{\langle a_0,\overline{e}\rangle'}{\lambda}+\frac{\beta}{\lambda^2}\right)a+\overline{e}+\frac{1}{\lambda}d,
\end{equation}
where $\overline{e}\in B$ satisfies

\begin{equation}\label{EqIdentityFlatness}
	v(\overline{e})=-\frac{1}{\lambda}a_0, \quad r_{\overline{e}}+\frac{1}{\lambda}v=\textnormal{id}_B,\quad (D-u)(\overline{e})=-\frac{1}{\lambda}b_0,\quad\textnormal{and}\quad R_{\overline{e}}-\frac{1}{\lambda}u=0.
\end{equation}

\end{proposition}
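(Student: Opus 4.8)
The plan is to work inside the decomposition $\mathfrak g = \mathbb R a \oplus B \oplus \mathbb R d$ that Proposition \ref{PropReduction} provides, using the explicit formulas \eqref{Levi--CivitaProductExtension} and \eqref{AssociativeCommutativeProductExtension}, and simply to impose the two defining properties of a flat unit: $e\circ x = x$ for all $x\in\mathfrak g$, and $x\ast e = 0$ for all $x\in\mathfrak g$. Write the unknown unit as $e=\xi a+\overline e+\zeta d$ with $\xi,\zeta\in\mathbb R$ and $\overline e\in B$; the whole proof is then the linear and bilinear bookkeeping that pins down $\xi,\zeta,\overline e$ and extracts the constraints \eqref{EqIdentityFlatness}.

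First I would extract $\zeta$. Since $e\circ d = d$, and by \eqref{AssociativeCommutativeProductExtension} the only way to produce a nonzero $d$-component in a $\circ$-product is through $\zeta\, d\circ d = \zeta(\beta a + a_0+\lambda d)$, comparing $d$-components forces $\zeta\lambda=1$, i.e. $\zeta=1/\lambda$ (in particular this shows $\lambda\neq 0$ is forced by the existence of a unit, which is worth a sentence). Next, $e\circ a = a$: using $a\circ a = a\circ x = 0$ and $a\circ d=\lambda a$ we get $e\circ a = \zeta\lambda\, a = a$ automatically, so this equation is vacuous. Then $e\circ x = x$ for $x\in B$: expand $(\xi a+\overline e+\zeta d)\circ x = \langle v(\overline e),x\rangle' a + \overline e\,\underline\circ\, x + \zeta(\langle a_0,x\rangle' a + v(x))$ and compare with $x = 0\cdot a + x$. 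The $B$-component gives $\overline e\,\underline\circ\, x + \tfrac1\lambda v(x) = x$, i.e. $r_{\overline e} + \tfrac1\lambda v = \mathrm{id}_B$, the second equation of \eqref{EqIdentityFlatness}. The $a$-component gives $\langle v(\overline e),x\rangle' + \tfrac1\lambda\langle a_0,x\rangle' = 0$ for all $x\in B$, and nondegeneracy of $\langle\cdot,\cdot\rangle'$ on $B$ yields $v(\overline e)=-\tfrac1\lambda a_0$, the first equation of \eqref{EqIdentityFlatness}. Finally $e\circ d = d$ in its $a$- and $B$-components: the $B$-component of $(\xi a+\overline e+\zeta d)\circ d$ is $v(\overline e)+\zeta a_0 = -\tfrac1\lambda a_0 + \tfrac1\lambda a_0 = 0$ (consistent, no new info), and the $a$-component is $\xi\lambda + \langle a_0,\overline e\rangle' + \zeta\beta$, which must vanish; solving gives $\xi = -\big(\langle a_0,\overline e\rangle'/\lambda + \beta/\lambda^2\big)$, which is exactly the coefficient displayed in \eqref{eFlatUnity}.

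For the flatness condition $x\ast e=0$ I would run the same comparison with \eqref{Levi--CivitaProductExtension}. From $a\ast e = 0$ nothing new arises since $a\ast a=a\ast x=a\ast d=0$ and $d\ast a=\mu a$ contributes only through... actually $a$ acts trivially on the left under $\ast$, so $a\ast e=0$ is automatic. The substantive cases are $d\ast e=0$ and $x\ast e=0$ for $x\in B$. For $x\in B$: $x\ast e = \langle u(x),\overline e\rangle' a + x\,\underline\ast\,\overline e + \zeta\, x\ast d = \langle u(x),\overline e\rangle' a + R_{\overline e}(x) - \tfrac1\lambda u(x)$; the $B$-component gives $R_{\overline e} = \tfrac1\lambda u$, the fourth equation of \eqref{EqIdentityFlatness}, and the $a$-component $\langle u(x),\overline e\rangle' = 0$ for all $x$ is then $\langle x, u^\ast(\overline e)\rangle'=\langle x,\,?\,\rangle'$; since $R_{\overline e}^\ast = r_{\overline e}$ is symmetric while we want to match $u$, I expect this $a$-component to follow from the already-derived relations together with $(D-u)^\ast=-(D-u)$ and $v^\ast=v$ rather than being independent — checking that it is indeed implied, and not an extra constraint, is the one genuinely delicate point. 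For $d\ast e=0$: using $d\ast a=\mu a$, $d\ast d=b_0-\mu d$, $d\ast x = -\langle b_0,x\rangle' a + (D-u)(x)$, compute $d\ast e = \xi\mu a + (-\langle b_0,\overline e\rangle' a + (D-u)(\overline e)) + \zeta(b_0-\mu d)$; the $d$-component is $-\zeta\mu = -\mu/\lambda$, which must vanish, and the $B$-component is $(D-u)(\overline e) + \tfrac1\lambda b_0 = 0$, the third equation of \eqref{EqIdentityFlatness}. The leftover scalar ($a$-component) and the $d$-component condition $\mu/\lambda=0$ I would argue are consequences of the structure equations \eqref{Eq1}–\eqref{Eq3} and \eqref{eq1}–\eqref{eq9} evaluated at $\overline e$, or absorbed into the definition of $\xi$; this reconciliation is where I would spend the most care.

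The main obstacle, then, is not producing the formula \eqref{eFlatUnity} — that falls out of comparing components — but verifying \emph{consistency}: showing that the several $a$-component scalar equations and the $d\ast e$ computation do not over-determine the system, i.e. that they are automatically satisfied once \eqref{EqIdentityFlatness} holds, given that $e$ is assumed to exist. I would handle this by invoking uniqueness of the unit in an associative algebra (so there is nothing to check beyond existence) for the $\circ$-side, and for the $\ast$-side by differentiating the identities $v(\overline e)=-\tfrac1\lambda a_0$ and $(D-u)(\overline e)=-\tfrac1\lambda b_0$ against the cocycle conditions on $v$, $u$ and the derivation property of $D$, together with self-adjointness of $v$ and skew-adjointness of $D-u$, to collapse the residual scalar identities to $0=0$.
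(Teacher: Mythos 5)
Your proposal is correct and is essentially the paper's own proof: write $e=\gamma' a+\overline e+\omega' d$, impose the unit condition for $\circ$ from \eqref{AssociativeCommutativeProductExtension} and the flatness condition for $\ast$ from \eqref{Levi--CivitaProductExtension} componentwise, and read off $\omega'=1/\lambda$, the $a$-coefficient in \eqref{eFlatUnity}, the four identities \eqref{EqIdentityFlatness}, and $\mu=0$. The consistency point you flag does collapse as you expect: $\langle u(x),\overline e\rangle'=\lambda\langle x\,\underline{\ast}\,\overline e,\overline e\rangle'=0$ by metric compatibility of $\underline{\ast}$ once $R_{\overline e}=\tfrac1\lambda u$, and $\langle b_0,\overline e\rangle'=-\lambda\langle (D-u)(\overline e),\overline e\rangle'=0$ by skew-adjointness of $D-u$, which is exactly what the paper's closing remark about ``properties of the flat pseudo-Riemannian structures'' refers to.
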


\begin{proof}
Suppose that $e=\gamma'a+\overline{e}+\omega'd$ for some $\gamma',\omega'\in \mathbb{R}$. The element $e \in \mathfrak g$ is a unit with respect to the product $\circ$ from \eqref{AssociativeCommutativeProductExtension} if and only if $\omega'=\frac{1}{\lambda}$, $\gamma'=-\left( \frac{\langle a_0,\overline{e}\rangle'}{\lambda}+\frac{\beta}{\lambda^2}\right)$ and the first two equalities in \eqref{EqIdentityFlatness} are satisfied. Additionally, $e$ is flat with respect to the Levi-Civita product \eqref{Levi--CivitaProductExtension} (that is, $\ast e=0$) if and only if $\mu=0$ and the remaining two equalities in \eqref{EqIdentityFlatness} hold true.\\

It is noteworthy that the derivation of these identities relied on the properties of the flat pseudo-Riemannian structures we are working with.\\
\end{proof}
 
Additionally, this result yields a method for constructing flat pseudo-Riemannian $F$-Lie algebras.\\

\begin{theorem}\label{ThmDoubleExtension}
Let $(B,\underline{\circ},\langle\cdot,\cdot \rangle')$ be a weakly flat pseudo-Riemannian $F$-Lie algebra. Suppose there exist parameters $\mu,\lambda,\beta\in \mathbb{R}$, $a_0,b_0, \overline{e}\in B$, $u\in Z^1_{CE}(B,B)_L$, $D\in \textnormal{Der}(B)$, $v\in Z^1_{AS}(B,B)_r$ with 
\begin{itemize}
\item $(D-u)^\ast=-(D-u)$,  $v^\ast=v$, and

\item satisfying the algebraic equations \eqref{Eq1} to \eqref{Eq3} and \eqref{eq1} to \eqref{eq9}.

\end{itemize}
Then, the vector space $\mathfrak{g}=\mathbb{R}a\oplus B\oplus \mathbb{R}d$ equipped with:
\begin{itemize}
\item a scalar product $\langle \cdot,\cdot\rangle$ extending $\langle\cdot,\cdot \rangle'$ ensuring $\mathbb{R}(a,b)$ is a hyperbolic plane orthogonal to $B$,

\item a Levi--Civita product $\ast$ given by \eqref{Levi--CivitaProductExtension}, and

\item an associative commutative product $\circ$ given by \eqref{AssociativeCommutativeProductExtension}
\end{itemize}
 defines a weakly flat pseudo-Riemannian $F$-Lie algebra.\\

Furthermore, if the additional algebraic equation  \eqref{EqIdentityFlatness} holds, implying $\mu=0$, then $\mathfrak{g}$ becomes a flat pseudo-Riemannian $F$-Lie algebra with a flat unit element $e$ given by \eqref{eFlatUnity}.\\
\end{theorem}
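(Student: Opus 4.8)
The plan is to run the argument of Proposition \ref{PropReduction} in reverse: we are given the ``reduced'' data $(B,\underline{\circ},\langle\cdot,\cdot\rangle')$ together with all the structure coefficients, and we must verify that the synthesised object on $\mathfrak{g}=\mathbb{R}a\oplus B\oplus\mathbb{R}d$ is genuinely a (weakly) flat pseudo-Riemannian $F$-Lie algebra. First I would fix the bilinear form $\langle\cdot,\cdot\rangle$ on $\mathfrak{g}$ as described (extending $\langle\cdot,\cdot\rangle'$, with $\langle a,d\rangle=1$, $\langle a,a\rangle=\langle d,d\rangle=0$, and $\mathbb{R}(a,d)\perp B$), check it is non-degenerate, and then check that the product $\ast$ of \eqref{Levi--CivitaProductExtension} is exactly the Levi--Civita product of $\langle\cdot,\cdot\rangle$ for the Lie bracket \eqref{LieBracketExtension}: one verifies $x\ast y-y\ast x=[x,y]$ on all pairs of basis-type elements and $\langle x\ast y,z\rangle+\langle y,x\ast z\rangle=0$, both of which are direct substitutions that use only the self-adjointness hypothesis $(D-u)^\ast=-(D-u)$, the cocycle identity \eqref{Eq2}, and the fact that $D$ is a derivation. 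Flatness of $\ast$ (the left-symmetric identity) is then automatic, or can be checked directly; this is precisely the content carried over from \cite{AubertMedina}, and equations \eqref{Eq1}--\eqref{Eq2} are exactly what make it go through.

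Next I would verify that $\circ$ of \eqref{AssociativeCommutativeProductExtension} is associative and commutative with the stated compatibility. Commutativity is visible from the formulas (using $v^\ast=v$, so $\langle v(x),y\rangle'=\langle x,v(y)\rangle'$). Associativity reduces, after expanding $(\xi\circ\eta)\circ\zeta=\xi\circ(\eta\circ\zeta)$ on all choices of $\xi,\eta,\zeta\in\{a\}\cup B\cup\{d\}$, to the associativity of $\underline{\circ}$ on $B$, the fact that $v$ is a right $1$-cocycle ($v(x\underline{\circ}y)=v(x)\underline{\circ}y$), and the single scalar-operator identity \eqref{Eq3}, $v^2=\lambda v+r_{a_0}$; the Frobenius identity $\langle\xi\circ\eta,\zeta\rangle=\langle\xi,\eta\circ\zeta\rangle$ then follows from $v^\ast=v$ together with the Frobenius identity on $B$ and a short bookkeeping of the $d$-components. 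At this point $(\mathfrak{g},\circ,\langle\cdot,\cdot\rangle)$ already satisfies conditions i.\ and (in the unital case) ii.\ of Definition \ref{mainDef3}, the latter being exactly the content of Proposition \ref{FlatUniryExtension} once \eqref{EqIdentityFlatness} is assumed: one substitutes the claimed $e$ of \eqref{eFlatUnity} into $e\circ\xi=\xi$ and $\xi\ast e=0$ and reads off that these hold iff the four displayed equations in \eqref{EqIdentityFlatness} hold and $\mu=0$.

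The main work — and the main obstacle — is verifying the Hertling--Manin relation (Definition \ref{mainDef3} iii.), i.e.\ that $HM(\xi,\eta,\zeta,\omega)=0$ for all $\xi,\eta,\zeta,\omega\in\mathfrak{g}$. Here I would invoke the symmetries of $HM$ (symmetric in the first two and in the last two slots, skew under swapping the pairs) to cut the number of cases, then use the preceding lemma (any slot equal to $a$ forces $HM=0$, since $[\mathfrak{g},\mathfrak{g}]=\mathbb{R}a\oplus B$, $a\circ(\mathbb{R}a\oplus B)=0$, $a\circ d=\lambda a$, and $[a,\cdot]=0$) to eliminate all quadruples containing an $a$. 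What remains is the list already tabulated before the proposition: $(d,d,d,x)$, $(x,y,d,d)$, $(x,d,y,d)$, $(x,y,z,d)$, and $(x,y,z,w)$ with $x,y,z,w\in B$. For each, one substitutes \eqref{LieBracketExtension} and \eqref{AssociativeCommutativeProductExtension}, collects the $a$-component and the $B$-component separately, and checks that the resulting identities are precisely \eqref{eq1}--\eqref{eq2}, \eqref{eq3}--\eqref{eq4}, \eqref{eq5}--\eqref{eq6}, \eqref{eq7}--\eqref{eq8}, and \eqref{eq9} together with the Hertling--Manin relation on $B$ — which is exactly the list of hypotheses. The computation is long and error-prone (each $HM$ expansion has nine bracket-and-product terms, each of which splits into several components), but it is mechanical; the only genuinely delicate point is keeping track of which cocycle identities for $u$, $v$, and $D$ (the Chevalley--Eilenberg condition on $u$, the right-$1$-cocycle condition on $v$, the derivation property and \eqref{Eq2} for $D$) are silently used when simplifying, so that no hypothesis is missed. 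Since the forward direction of these equivalences was already asserted in deriving \eqref{eq1}--\eqref{eq9}, the reverse direction needed here is just the observation that each of those derivations is an ``if and only if''. Finally, assembling: conditions i., iii., and non-degeneracy give the weakly flat $F$-Lie algebra; adding \eqref{EqIdentityFlatness} gives ii.\ and $\mu=0$ via Proposition \ref{FlatUniryExtension}, completing the flat case.
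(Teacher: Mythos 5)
Your proposal is correct and follows essentially the same route as the paper, which gives no separate proof of Theorem \ref{ThmDoubleExtension}: the result is read off as the converse of the ``if and only if'' derivations of equations \eqref{Eq1}--\eqref{Eq3} and \eqref{eq1}--\eqref{eq9} preceding Proposition \ref{PropReduction}, together with Proposition \ref{FlatUniryExtension} for the flat unit. Your case-by-case verification (metric and Levi--Civita structure via \cite{AubertMedina}, commutativity/associativity/Frobenius from $v^\ast=v$, the cocycle condition and \eqref{Eq3}, and the Hertling--Manin relation reduced by the $a$-slot lemma to the tabulated equations) is exactly the intended argument.
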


Motivated by Theorem \ref{ThmDoubleExtension}, we introduce the following definition.
\begin{definition}\label{doubleextensionFAS}
The weakly flat pseudo-Riemannian $F$-Lie algebra $(\mathfrak{g},\circ, \langle \cdot,\cdot\rangle)$ constructed in Theorem \ref{ThmDoubleExtension} is called the \emph{double extension} of the weakly flat pseudo-Riemannian $F$-Lie algebra $(B,\underline{\circ},\langle\cdot,\cdot \rangle')$  with respect to the parameters $(\mu, \lambda,\beta,a_0,b_0,u,D,v)$. \\

For the flat counterpart of the double extension, the parameter $\mu$ is set to zero, and a new parameter, $\overline{e} \in B$, is introduced.\\
\end{definition}

\begin{remark}\label{RemarkIntegration}
It is known that, as consequence of the Lie's third theorem, there exists a connected and simply Lie group $G$ integrating the Lie algebra $\mathfrak{g}$ constructed in Theorem \ref{ThmDoubleExtension}. Firstly, the parameters appearing in equation \eqref{LieBracketExtension} determine the structure constants of $\mathfrak{g}$ which, in turn, completely give rise to a parallelism formed by left invariant vector fields on $G$. Secondly, the scalar product $\langle \cdot,\cdot \rangle$ can be promoted to define a left invariant pseudo-Riemannian metric $\eta$ on $G$, so that the Levi--Civita product \eqref{Levi--CivitaProductExtension} is recovered when evaluating the corresponding Levi--Civita connection along the parallelism we have just fixed. In fact, the parameters that show up in equation \eqref{Levi--CivitaProductExtension} completely determine the Christoffel's symbols of the connection. Thirdly, the associative commutative product \eqref{AssociativeCommutativeProductExtension} yields a left invariant symmetric $(1,2)$-tensor field on $G$ by carrying out similar computations. Once again, the smooth component functions of such a tensor field are completely determined by the parameters appearing in equation \eqref{AssociativeCommutativeProductExtension}, after evaluating the parallelism of left invariant vector fields on $G$. All these geometric structures together define left invariant flat pseudo-Riemannian $F$-structures on $G$ if and only if the parameters of the double extension satisfy the conditions quoted in Theorem 3.9. As expected, such algebraic constrains can be rewritten in more geometric terms by using the associated smooth component functions that specify the geometric structures involved.
\end{remark}

A particularly noteworthy consequence of this double extension process is its ability to generate all weakly flat Lorentzian non-abelian bi-nilpotent $F$-Lie algebras having 1-dimensional light-cone subspaces that are associative two-sided ideals.\\

\begin{definition}
A \emph{bi-nilpotent Lie algebra} is a pair $(\mathfrak{g},\circ)$ where $\mathfrak{g}$ is a nilpotent Lie algebra and $\circ:\mathfrak{g}\times \mathfrak{g}\to \mathfrak{g}$ is a commutative associative product such that, for all $x\in \mathfrak{g}$, the right multiplication map $r_x:\mathfrak{g}\to \mathfrak{g}$ defined by $r_x(y)=x\circ y$ is a nilpotent linear map.
\end{definition}

We denote by $N(\mathfrak{g})=\lbrace x\in \mathfrak{g}: L_x=0\rbrace$ the kernel of the Lie algebra homomorphism $L:\mathfrak{g}\to\mathfrak{gl}(\mathfrak{g})$ defined by $L_x(y)=x\ast y$ for all $x,y\in\mathfrak{g}$. Additionaly, $Z(\mathfrak{g})=\lbrace x\in \mathfrak{g}: \textnormal{ad}_x=0\rbrace$ denotes the center of the Lie algebra $\mathfrak{g}$. The following fact follows from the proof of Theorem 3.1 in \cite{AubertMedina}.
\begin{lemma}
If $(\mathfrak{g},\langle\cdot,\cdot\rangle)$ is a flat Lorentzian non-abelian nilpotent Lie algebra. Then, there exists a non-zero element $\hat{a}\in  N(\mathfrak{g})\cap Z(\mathfrak{g})$ such that $\langle \hat{a},\hat{a}\rangle=0$.
\end{lemma}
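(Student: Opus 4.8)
The plan is to exploit the structure theory of flat Lorentzian Lie algebras as developed in \cite{AubertMedina}, since the statement is essentially a by-product of the proof of their Theorem 3.1. First I would recall the key structural input: for a flat pseudo-Riemannian Lie algebra $(\mathfrak{g},\langle\cdot,\cdot\rangle)$, the Levi--Civita product $\ast$ is left-symmetric and, crucially, each left multiplication $L_x$ is skew-symmetric with respect to $\langle\cdot,\cdot\rangle$, while each right multiplication $R_x$ satisfies $R_x = L_x - \operatorname{ad}_x$. In the Lorentzian non-abelian nilpotent case, one shows that $L:\mathfrak{g}\to\mathfrak{gl}(\mathfrak{g})$ cannot be injective: if it were, the image would be a nilpotent subalgebra of skew-symmetric endomorphisms of a Lorentzian vector space, and a standard argument (nilpotent skew-symmetric operators on a Lorentzian space have totally isotropic image contained in kernel, forcing severe constraints) together with the non-abelian hypothesis yields a contradiction. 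Hence $N(\mathfrak{g})=\ker L\neq 0$.

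Next I would locate the desired element inside $N(\mathfrak{g})$. The key point is that $N(\mathfrak{g})$ is an abelian ideal on which $\langle\cdot,\cdot\rangle$ restricts, and one analyzes the radical of this restriction. Using nilpotency of $\mathfrak{g}$ and the identity $R_x=L_x-\operatorname{ad}_x$, one checks that $N(\mathfrak{g})$ is contained in the center $Z(\mathfrak{g})$ of the Lie algebra (indeed for $x\in N(\mathfrak{g})$, $\operatorname{ad}_x = -R_x$, and right multiplications in the nilpotent flat setting are nilpotent, while also $\langle\operatorname{ad}_x y,z\rangle = -\langle y,\operatorname{ad}_x z\rangle$ would make $\operatorname{ad}_x$ skew and nilpotent on a space where this combination forces $\operatorname{ad}_x$ to vanish once one restricts attention correctly) — this is the step where I would follow \cite{AubertMedina} most closely. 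So $N(\mathfrak{g})\subseteq N(\mathfrak{g})\cap Z(\mathfrak{g})$, and it remains to produce a nonzero isotropic vector in $N(\mathfrak{g})$. If $\langle\cdot,\cdot\rangle|_{N(\mathfrak{g})}$ is degenerate, any nonzero vector in its radical works. If it is non-degenerate, then $\mathfrak{g} = N(\mathfrak{g})\oplus N(\mathfrak{g})^\perp$ as an orthogonal direct sum; since the metric is Lorentzian, at most one of the two summands can be Lorentzian and the other must be definite. A definite subspace admitting a nonzero nilpotent skew-symmetric operator is impossible, and combining this with the non-abelian assumption on $\mathfrak{g}$ forces $N(\mathfrak{g})$ itself to contain a null line — more precisely one shows $N(\mathfrak{g})$ must be Lorentzian (so it contains isotropic vectors) because otherwise $\mathfrak{g}$ decomposes as an orthogonal product of a definite abelian factor and a lower-dimensional flat Lorentzian piece, and an induction on dimension reduces to the abelian case, contradicting non-abelianness.

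I would therefore organize the write-up as: (1) cite that $N(\mathfrak{g})\neq 0$ from the non-abelian nilpotent Lorentzian hypothesis via \cite[Theorem 3.1]{AubertMedina}; (2) show $N(\mathfrak{g})\subseteq Z(\mathfrak{g})$ using $\operatorname{ad}_x=-R_x$ on $N(\mathfrak{g})$ together with nilpotency and skew-symmetry of the relevant operators; (3) conclude that $\langle\cdot,\cdot\rangle$ restricted to $N(\mathfrak{g})$ is either degenerate — giving an isotropic $\hat a\in N(\mathfrak{g})\cap Z(\mathfrak{g})$ immediately — or Lorentzian — in which case $N(\mathfrak{g})$ contains null vectors directly; (4) rule out the Euclidean case for $N(\mathfrak{g})$ by the orthogonal-splitting/induction argument against non-abelianness.

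The main obstacle I anticipate is step (2)–(4): carefully showing that the Euclidean (positive-definite) case for $N(\mathfrak{g})$ cannot occur requires the full interplay between left-symmetry, the skew-symmetry of $L_x$, the relation $R_x = L_x-\operatorname{ad}_x$, and the nilpotency of $\mathfrak{g}$; this is precisely the content buried in the proof of \cite[Theorem 3.1]{AubertMedina}, and the cleanest route is to extract it as a lemma rather than reprove it from scratch. Establishing $N(\mathfrak{g})\subseteq Z(\mathfrak{g})$ also needs a short argument that a nilpotent skew-symmetric endomorphism of a Lorentzian space that also arises as $-\operatorname{ad}_x$ for $x$ central-modulo-lower-terms must vanish, which again follows the template of \cite{AubertMedina}.
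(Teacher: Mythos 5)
Your plan founders at step (2): the inclusion $N(\mathfrak{g})\subseteq Z(\mathfrak{g})$ is simply false, and it is the load-bearing step of your argument, since steps (3)--(4) only manufacture an isotropic vector inside $N(\mathfrak{g})$ and rely on (2) to place it in the center. A counterexample appears in this very paper: for the Heisenberg algebra $\mathfrak{h}_3=\mathbb{R}(a,x,d)$ with $[d,x]=-a$, the flat Lorentzian metric $\langle a,d\rangle=\langle x,x\rangle=1$ has Levi--Civita product $d\ast x=-a$, $d\ast d=x$ and all other products zero; hence $L_x=0$, so $x\in N(\mathfrak{h}_3)$, while $[d,x]=-a\neq 0$, so $x\notin Z(\mathfrak{h}_3)$. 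Thus $N(\mathfrak{h}_3)=\mathrm{span}(a,x)\not\subseteq Z(\mathfrak{h}_3)=\mathbb{R}a$. The heuristic you offer for (2) also fails concretely: for $x\in N(\mathfrak{g})$ one has $\operatorname{ad}_x=-R_x$, but $R_x$, and hence $\operatorname{ad}_x$, need not be skew-symmetric (in the example $\langle \operatorname{ad}_x d,d\rangle=\langle a,d\rangle=1\neq 0$), so there is no ``skew and nilpotent, hence zero'' argument available. Your step (1) is also shakier than stated: elements of a nilpotent subalgebra of $\mathfrak{so}(1,n-1)$ need not be nilpotent operators (rotations in a definite subspace give semisimple examples), so injectivity of $L$ is not refuted by the sketch you give.

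What is actually needed, and what the proof of Theorem 3.1 in Aubert--Medina delivers, is a statement about the intersection $N(\mathfrak{g})\cap Z(\mathfrak{g})$ itself: one shows this intersection is nonzero and cannot be definite for the induced metric (if it were nondegenerate one splits off its orthogonal complement and reduces; if degenerate, the radical furnishes the isotropic $\hat a$), the Lorentzian signature and non-abelianness entering at that stage. Note for calibration that the paper offers no independent argument here --- the lemma is extracted verbatim from the proof of \cite[Theorem 3.1]{AubertMedina} --- so your instinct to defer to that reference matches the paper; but the bridge you build to it, via $N(\mathfrak{g})\subseteq Z(\mathfrak{g})$, is incorrect and must be replaced by an analysis of $N(\mathfrak{g})\cap Z(\mathfrak{g})$ directly.
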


Observe that the line $\mathbb{R}\hat{a}$ can be thought of as an 1-dimensional \emph{light-cone} subspace of $(\mathfrak{g},\langle\cdot,\cdot\rangle)$. Hence, in these terms we have:

\begin{theorem}\label{bi-Nil}
Let $(\mathfrak{g},\circ)$ be a non-abelian bi-nilpotent Lie algebra. Then, $\mathfrak{g}$ admits a structure of weakly flat Lorentzian $F$-Lie algebra $(\langle\cdot,\cdot\rangle,\circ)$, with $\mathbb{R}\hat{a}$ being a two-sided ideal of $(\mathfrak{g},\circ)$, if and only if it is obtained as a double extension of a weakly abelian bi-nilpotent Riemannian $F$-Lie algebra by a straight line with respect to $(0, 0,\beta,a_0,b_0,D,D,v)$ where $D^2=0$ and $\textnormal{im}(D)\subset (\mathbb{R}b_0)^\perp$. Moreover, $v$ is always nilpotent.
\end{theorem}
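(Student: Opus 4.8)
The plan is to prove both implications of Theorem \ref{bi-Nil} by specializing the general reduction and double extension machinery (Proposition \ref{PropReduction} and Theorem \ref{ThmDoubleExtension}) to the bi-nilpotent Lorentzian setting.

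For the \emph{necessity} direction, I would start from a weakly flat Lorentzian $F$-Lie algebra $(\mathfrak{g},\langle\cdot,\cdot\rangle,\circ)$ with $(\mathfrak{g},\circ)$ non-abelian bi-nilpotent and $\mathbb{R}\hat{a}$ a two-sided ideal of $(\mathfrak{g},\circ)$. First I would invoke the Lemma preceding the theorem (the refinement of \cite[Thm 3.1]{AubertMedina}) to produce $\hat{a}\in N(\mathfrak{g})\cap Z(\mathfrak{g})$ with $\langle\hat{a},\hat{a}\rangle=0$, so $I=\mathbb{R}\hat{a}$ is totally isotropic, a two-sided ideal of $(\mathfrak{g},\ast)$ (since $\hat a\in N(\mathfrak g)\cap Z(\mathfrak g)$ forces $\ast$-products with $\hat a$ to vanish, hence trivially an ideal) and, by hypothesis, of $(\mathfrak{g},\circ)$. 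Then $I^\perp$ is automatically a right ideal of $(\mathfrak{g},\ast)$ because $I^\perp\ast I\subset \mathfrak g\ast I=0$, so Lemma \ref{Reduction}(iii) and Proposition \ref{PropReduction} apply: $\mathfrak g$ is the double extension of $B=I^\perp/I$ with parameters $(\mu,\lambda,\beta,a_0,b_0,u,D,v)$ satisfying \eqref{Eq1}--\eqref{Eq3} and \eqref{eq1}--\eqref{eq9}. The key point is then to extract the constraints forced by \emph{bi-nilpotency and the Lorentzian signature}: since the scalar product has signature $(1,n-1)$ and $B$ carries the induced flat scalar product $\langle\cdot,\cdot\rangle'$ on a space of signature $(0,n-2)$, i.e. positive definite, $B$ is in fact a \emph{weakly abelian} Riemannian $F$-Lie algebra — a flat Riemannian Lie algebra is abelian (this is the classical Milnor-type fact used in \cite{AubertMedina}), so $[\cdot,\cdot]_B=0$ and $\underline\ast=0$. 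From $[\cdot,\cdot]_B=0$ one gets $u=u^\ast$ (the skew part of the bracket cocycle in \eqref{LieBracketExtension} vanishes), and the derivation condition $(D-u)^\ast=-(D-u)$ together with $u^\ast=u$ forces... here I would chase the equations: abelianness of $B$ makes $R_{b_0}=0$, $\mathrm{ad}=0$, $L=0$, collapsing \eqref{Eq1}--\eqref{Eq2}; nilpotency of $\mathfrak g$ then forces $\mu=0$ (an Euler-type eigenvalue in $[d,a]=\mu a$ would obstruct nilpotency) and, I expect after examining the reduced forms of \eqref{eq1}--\eqref{eq9}, also $\lambda=0$ (the $\lambda d$ term in $d\circ d$ and $a\circ d=\lambda a$ would make $r_d$ non-nilpotent, contradicting bi-nilpotency), whence \eqref{Eq3} becomes $v^2=r_{a_0}$ with $r_{a_0}$ nilpotent on the abelian $B$, giving $v$ nilpotent. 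With $\lambda=u-u^\ast=0$, the relation $(D-u)^\ast=-(D-u)$ reads $D^\ast=-D+2u$, and I would argue that the surviving Hertling--Manin equations (now with all $B$-brackets gone) force $D=u$ — this should come out of \eqref{eq2} or \eqref{eq4} after setting $\lambda=\mu=0$ — so the parameter tuple is $(0,0,\beta,a_0,b_0,D,D,v)$. Finally, nilpotency of $D$ (it acts on $\mathfrak g$ via $[d,x]=-\langle b_0,x\rangle'a+D(x)$, and $\mathrm{ad}_d$ nilpotent plus the light-cone direction forces $D^2=0$) and the orthogonality $\mathrm{im}(D)\subset(\mathbb{R}b_0)^\perp$ (this should fall out of $D^\ast=-D$ combined with $d\ast d=b_0$ and $\langle D(x),b_0\rangle'=-\langle x,D(b_0)\rangle'$ together with a computation showing $D(b_0)=0$, or directly from one of \eqref{eq1},\eqref{eq5}).

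For the \emph{sufficiency} direction I would simply run Theorem \ref{ThmDoubleExtension} in reverse on a weakly abelian bi-nilpotent Riemannian $F$-Lie algebra $B$ with the prescribed tuple $(0,0,\beta,a_0,b_0,D,D,v)$, $D^2=0$, $\mathrm{im}(D)\subset(\mathbb{R}b_0)^\perp$, and $v$ nilpotent (which, as noted, is automatic from $v^2=r_{a_0}$ since $r_{a_0}$ is nilpotent on the abelian associative algebra $B$). I would verify that with $\lambda=\mu=0$, $u=D$, $[\cdot,\cdot]_B=0$, the equations \eqref{Eq1}--\eqref{Eq3} and \eqref{eq1}--\eqref{eq9} all reduce to consequences of $D^2=0$ and $\mathrm{im}(D)\subset(\mathbb Rb_0)^\perp$ (most terms vanish identically: every $\mathrm{ad}_B$, every $\underline\ast$, every commutator $[\cdot,\cdot]_B$ is zero, and $\lambda$-terms, $\mu$-terms drop out), so Theorem \ref{ThmDoubleExtension} produces a weakly flat $F$-Lie algebra $\mathfrak g=\mathbb Ra\oplus B\oplus\mathbb Rd$. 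Then I would check the Lie algebra $\mathfrak g$ is nilpotent: with $\mu=0$, $[d,a]=0$, $[d,x]=-\langle b_0,x\rangle'a+D(x)$, $[x,y]=0$, so $\mathrm{ad}_d$ is nilpotent because $D$ is, and $[\mathfrak g,\mathfrak g]\subseteq\mathbb Ra\oplus\mathrm{im}(D)$ is abelian and central-enough to terminate the lower central series; the product $\circ$ is bi-nilpotent since $r_a=0$, $r_x=$ (nilpotent on $B$ via $v$ and $\underline\circ$), and $r_d$ acts by $a\mapsto 0$, $x\mapsto\langle a_0,x\rangle'a+v(x)$, $d\mapsto\beta a+a_0$, which is nilpotent because $v$ is nilpotent and $a_0\in B$ with $\underline\circ$ nilpotent. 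The signature is Lorentzian because $B$ is Riemannian (positive definite) and the added plane $\mathbb R(a,d)$ is hyperbolic, contributing signature $(1,1)$, for total $(1,n-1)$; non-abelianness of $\mathfrak g$ holds precisely when $D\ne0$ or $b_0\ne0$, which one arranges, and $\mathbb R\hat a=\mathbb R a$ is by construction a two-sided $\circ$-ideal with $\langle a,a\rangle=0$, i.e. a 1-dimensional light-cone subspace.

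The main obstacle I anticipate is the bookkeeping in the necessity direction: showing that bi-nilpotency plus Lorentzian signature forces precisely $\mu=\lambda=0$, $u=D$, $D^2=0$, and $\mathrm{im}(D)\subset(\mathbb Rb_0)^\perp$ out of the full system \eqref{Eq1}--\eqref{Eq3}, \eqref{eq1}--\eqref{eq9}. The signature argument (flat Riemannian $\Rightarrow$ abelian) is standard and immediately kills most terms, but disentangling which of the remaining $\lambda$- and $\mu$-free equations pins down $D=u$ versus which merely follow, and verifying nilpotency of $\mathrm{ad}_d$ is equivalent to $D^2=0$ rather than just $D$ nilpotent (the light-cone/one-dimensional-ideal hypothesis is what upgrades ``$D$ nilpotent'' to ``$D^2=0$''), will require care. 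I would handle this by first reducing to the abelian-$B$ case to get a drastically simplified system, then reading off each structural constraint one at a time, using the nilpotency of the single operator $\mathrm{ad}_d|_{\mathbb Ra\oplus B}$ (whose square, on $B$, is essentially $D^2$) as the crucial lever.
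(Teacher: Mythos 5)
Your overall architecture (reduce along $I=\mathbb{R}\hat a$ using Lemma \ref{Reduction}, extract the constraints forced by bi-nilpotency, reverse the construction via Theorem \ref{ThmDoubleExtension}) matches the paper, and several steps coincide exactly: $\lambda=0$ from nilpotency of the right multiplications acting on the light-cone line, nilpotency of $v$ from \eqref{Eq3} with $\lambda=0$ and $\underline\circ$ nilpotent, and the power formulas for $r_x^k$, $r_d^k$ in the converse. The genuine gap is in how you propose to pin down the metric-side data $\mu=0$, $u=D$, $D^2=0$, $\textnormal{im}(D)\subset(\mathbb{R}b_0)^\perp$ and $B$ abelian Riemannian. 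The paper does not derive these from the $F$-structure equations at all: it quotes the Aubert--Medina classification of flat Lorentzian non-abelian nilpotent Lie algebras, which already exhibits $(\mathfrak g,\langle\cdot,\cdot\rangle)$ as the pseudo-Riemannian double extension of an abelian Euclidean $B$ with precisely those parameters, leaving only $\lambda=0$ and the nilpotency bookkeeping for $\underline\circ$ and $v$ to be checked. Your proposed substitutes for that input do not work: (i) $[\cdot,\cdot]_B=0$ does not give $u=u^\ast$ --- in \eqref{LieBracketExtension} the skew part $u-u^\ast$ sits in the $a$-component of $[x,y]$, not in $[x,y]_B$, and indeed the classification allows non-symmetric $u=D$ (this is how two-step nilpotent examples arise); (ii) $u=D$ cannot ``come out of \eqref{eq2} or \eqref{eq4}'': those equations contain no $u$ at all, and in the whole system \eqref{eq1}--\eqref{eq9} the map $u$ appears only through $u-u^\ast$, so the Hertling--Manin conditions can never identify $u$ with $D$; this identification is Levi--Civita/nilpotency/signature information, i.e. exactly the Aubert--Medina part that your sketch never actually proves (Eq.\ \eqref{Eq1}, reduced to $[D,u]=u^2$ for abelian $B$ with $\mu=0$, does not force it either); (iii) the route to $\textnormal{im}(D)\subset(\mathbb{R}b_0)^\perp$ via $D^\ast=-D$ is unavailable: with $u=D$ the condition $(D-u)^\ast=-(D-u)$ is vacuous, and a skew-adjoint $D$ with $D^2=0$ on a Euclidean $B$ would force $D=0$.

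Two further inaccuracies: in the sufficiency direction, the equations \eqref{eq1}--\eqref{eq9} do \emph{not} ``reduce to consequences of $D^2=0$ and $\textnormal{im}(D)\subset(\mathbb{R}b_0)^\perp$''; even with $B$ abelian and $\lambda=\mu=0$ they remain genuine constraints coupling $v$, $D$, $a_0$ and $\underline\circ$ (for instance \eqref{eq2} becomes $2[v,vD]+r_{D(a_0)}=0$ and \eqref{eq4} becomes $2v\bigl(D(x\underline\circ y)-x\underline\circ D(y)-y\underline\circ D(x)\bigr)=0$). They hold in the converse only because ``double extension'' in the sense of Definition \ref{doubleextensionFAS} presupposes them, not because they are automatic; what genuinely remains to verify there is what the paper verifies, namely nilpotency of $\circ$ on top of Aubert--Medina's guarantee that $\mathfrak g$ is nilpotent, non-abelian and flat Lorentzian. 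Finally, ``a flat Riemannian Lie algebra is abelian'' is false in general (e.g.\ the Euclidean algebra $\mathfrak{e}(2)$ is flat and non-abelian); the conclusion you want holds here only because $B$, being a quotient of a subalgebra of the nilpotent $\mathfrak g$, is nilpotent.
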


\begin{proof}
Suppose that $(\mathfrak{g},\circ, \langle \cdot,\cdot\rangle)$ is a weakly flat Lorentzian non-abelian bi-nilpotent $F$-Lie algebra such that $I=\mathbb{R}\hat{a}$ is a two-sided ideal of $(\mathfrak{g},\circ)$. It follows from \cite{AubertMedina} that $(\mathfrak{g},\langle \cdot,\cdot\rangle)$ is the pseudo-Riemannian double extension of an abelian Riemannian Lie algebra $(B,\langle \cdot,\cdot\rangle')$ by the straight line $I=\mathbb{R}\hat{a}$ with respect to $\mu=0$ and $u=D$ with $D^2=0$ and $\textnormal{im}(D)\subset (\mathbb{R}b_0)^\perp$. That is, $\mathfrak{g}=\mathbb{R}\hat{a}\oplus B\oplus \mathbb{R} d$. Since $B$ is abelian, it is clearly nilpotent and its Levi--Civita product $\underline{\ast}$ is trivial. By Lemma \ref{Reduction}, we can canonically induce an associative commutative product $\underline{\circ}$ over $B$ so that $(B,\underline{\circ}, \langle \cdot,\cdot\rangle')$ is a weakly abelian Riemannian $F$-Lie algebra. Thus, it remains to check that $\lambda=0$ and $\underline{\circ}$ is nilpotent. By assumption we know that $\circ$, which is given by \eqref{AssociativeCommutativeProductExtension}, is nilpotent. Additionally, $r_{\hat{a}}^k(d)=\lambda^k \hat{a}$ for all $k\in \mathbb{N}$, meaning that $\lambda=0$. Furthermore, we obtain the formulas:
$$r_x^k(y)=\langle v(x),\overline{r}_x^{k-1}(y)\rangle'\hat{a}+\overline{r}_x^{k}(y)\quad\textnormal{and}\quad r_d^k(x)=\langle a_0,v^{k-1}(x)\rangle'\hat{a}+v^{k}(x),\quad k\in \mathbb{N}$$
where $\overline{r}_x(y)=x\underline{\circ}y$ for all $x,y\in B$ and
$$r_d(d)=\beta a+a_0\quad \textnormal{and}\quad  r_d^k(d)=\langle a_0,v^{k-2}(a_0)\rangle'\hat{a}+v^{k-1}(a_0),\quad k\in \mathbb{N}_{\geq 2}.$$\\

Since $\circ$ is nilpotent, the product $\underline{\circ}$ must also be nilpotent.\\

Conversely, we now consider the double extension $\mathfrak{g}=\mathbb{R}\hat{a}\oplus B\oplus \mathbb{R} d$ of a weakly abelian bi-nilpotent Riemannian $F$-Lie algebra $(B,\underline{\circ},\langle\cdot,\cdot \rangle')$ with respect to $(0,0,\beta,a_0,b_0,D,D,v)$, where $D^2=0$ and $\textnormal{im}(D)\subset (\mathbb{R}b_0)^\perp$. From \cite{AubertMedina}, we know that such a $\mathfrak{g}$  is nilpotent, non-abelian and
admits a flat Lorentzian scalar product $\langle\cdot,\cdot \rangle$ with $\langle\hat{a},\hat{a} \rangle=0$. Therefore, since $\lambda=0$, Equation \eqref{Eq3} implies that $v^2=\overline{r}_{a_0}$. Consequently, the nilpotency of $\underline{\circ}$ implies the nilpotency of $v$. Hence, the formulas above show that $\circ$ must also be nilpotent. 
\end{proof}

\begin{corollary}\label{bi-NilCorollary}
There are no flat Lorentzian non-abelian bi-nilpotent $F$-Lie algebras $(\mathfrak{g},\circ, \langle \cdot,\cdot\rangle)$ such that $I=\mathbb{R}\hat{a}$ is a two-sided ideal of $(\mathfrak{g},\circ)$ for some non-zero element $\hat{a}\in  N(\mathfrak{g})\cap Z(\mathfrak{g})$ satisfying $\langle \hat{a},\hat{a}\rangle=0$.
\end{corollary}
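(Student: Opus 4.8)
The plan is to argue by contradiction and reduce the assertion to Theorem~\ref{bi-Nil} together with Proposition~\ref{FlatUniryExtension}: the former presents $\mathfrak{g}$ as a double extension in which the scalar $\lambda$ vanishes, whereas a flat unit as in \eqref{eFlatUnity} requires $\lambda\neq 0$.

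Concretely, suppose $(\mathfrak{g},\circ,\langle\cdot,\cdot\rangle,e)$ were such a flat Lorentzian non-abelian bi-nilpotent $F$-Lie algebra, with $I=\mathbb{R}\hat{a}$ a two-sided ideal of $(\mathfrak{g},\circ)$, $\hat{a}\in N(\mathfrak{g})\cap Z(\mathfrak{g})$ nonzero, and $\langle\hat{a},\hat{a}\rangle=0$. The first step is to verify that the underlying triple $(\mathfrak{g},\circ,\langle\cdot,\cdot\rangle)$ is a weakly flat pseudo-Riemannian $F$-Lie algebra meeting the hypotheses of Theorem~\ref{bi-Nil} (equivalently, of Proposition~\ref{PropReduction}). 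The line $I$ is one-dimensional and totally isotropic because $\langle\hat{a},\hat{a}\rangle=0$; it is a two-sided ideal of $(\mathfrak{g},\circ)$ by hypothesis; and it is a two-sided ideal of $(\mathfrak{g},\ast)$ with $\mathfrak{g}\ast I=I\ast\mathfrak{g}=0$, since $\hat{a}\in N(\mathfrak{g})$ gives $\hat{a}\ast x=0$, while the Lie-bracket identity $[\hat{a},x]=\hat{a}\ast x-x\ast\hat{a}$ together with $\hat{a}\in Z(\mathfrak{g})$ then gives $x\ast\hat{a}=0$ for all $x\in\mathfrak{g}$. In particular $I^\perp\ast I=0$, so $I^\perp$ is a right ideal of $(\mathfrak{g},\ast)$ by Lemma~\ref{Reduction}(ii), and all the standing assumptions are in force.

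The second step is to invoke Theorem~\ref{bi-Nil}: $\mathfrak{g}$ is a double extension of a weakly abelian bi-nilpotent Riemannian $F$-Lie algebra by a straight line with respect to a tuple of the form $(0,0,\beta,a_0,b_0,D,D,v)$; in particular $\lambda=0$. Hence the associative commutative product of $\mathfrak{g}$ is \eqref{AssociativeCommutativeProductExtension} with $\lambda=0$, so that $a\circ\mathfrak{g}=0$ (note $a\circ d=\lambda a=0$), and therefore $e'\circ a=0\neq a$ for every $e'\in\mathfrak{g}$; thus $(\mathfrak{g},\circ)$ admits no unit element. This contradicts the fact that a flat pseudo-Riemannian $F$-Lie algebra carries a unit $e$ (equivalently, the $d$-component $\frac{1}{\lambda}$ of the flat unit \eqref{eFlatUnity} is meaningless when $\lambda=0$), so no such $\mathfrak{g}$ exists.

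There is no deep obstacle here; the one point requiring a little care is the first step, namely translating the hypotheses on $\hat{a}$ into the ideal and right-ideal conditions under which Theorem~\ref{bi-Nil} and Proposition~\ref{FlatUniryExtension} apply, after which the equality $\lambda=0$ makes the contradiction immediate. For completeness I would also record the elementary reason behind the statement: a flat $F$-Lie algebra carries a unit $e$, whence $r_e=\mathrm{id}_{\mathfrak{g}}$, which is a nilpotent operator only if $\mathfrak{g}=0$, excluded by non-abelianness; nevertheless, the route through Theorem~\ref{bi-Nil} is the one consistent with the preceding development.
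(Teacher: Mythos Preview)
Your proposal is correct and follows essentially the same route as the paper: the paper's one-line proof reads ``This corollary follows directly from Proposition~\ref{FlatUniryExtension}, because $\lambda$ must be zero,'' which is precisely your argument via Theorem~\ref{bi-Nil} and the $\frac{1}{\lambda}$-coefficient in \eqref{eFlatUnity}. Your expanded first step (checking that $I$ is a two-sided $\ast$-ideal and $I^\perp$ a right $\ast$-ideal from $\hat{a}\in N(\mathfrak{g})\cap Z(\mathfrak{g})$) is a helpful unpacking the paper leaves implicit, and your closing remark that $r_e=\mathrm{id}_{\mathfrak{g}}$ cannot be nilpotent gives an independent elementary shortcut that bypasses the double-extension machinery altogether.
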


\begin{proof}
This corollary follows directly from Proposition \ref{FlatUniryExtension}, becuase $\lambda$ must be zero.
\end{proof}

We can also prove that:

\begin{proposition}\label{bi-Nil2}
Let $(\mathfrak{g},\circ)$ be a non-abelian bi-nilpotent  Lie algebra. Then, $\mathfrak{g}$ admits a structure of weakly flat pseudo-Riemannian $F$-Lie algebra $(\langle\cdot,\cdot\rangle,\circ)$ with the signature of $\langle\cdot,\cdot\rangle$ equal to $(2,n-2)$ for $n\geq 4$, and $\mathbb{R}\hat{a}$ being a two-sided ideal of $(\mathfrak{g},\circ)$ for some non-zero $\hat{a} \in Z(\mathfrak{g})\cap Z(\mathfrak{g})^\perp$, if and only if it is obtained as a double extension of a weakly bi-nilpotent Lorentzian $F$-Lie algebra by a straight line with respect to $(0, 0,\beta,a_0,b_0,u,D,v)$ with $D$ nilpotent.
\end{proposition}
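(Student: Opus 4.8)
The plan is to follow the proof of Theorem~\ref{bi-Nil} almost line by line, replacing the abelian Riemannian base by a Lorentzian one and keeping careful track of signatures. The relevant bookkeeping is the following: if $\langle\cdot,\cdot\rangle$ has signature $(2,n-2)$ and $I=\mathbb{R}\hat a$ is a one-dimensional totally isotropic subspace, then $\langle\cdot,\cdot\rangle$ restricted to $I^\perp$ has kernel exactly $I$, so the induced form $\langle\cdot,\cdot\rangle'$ on $B=I^\perp/I$ has signature $(1,n-3)$, which is Lorentzian precisely because $n\geq 4$.

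For the direct implication, let $(\mathfrak g,\circ,\langle\cdot,\cdot\rangle)$ be a weakly flat pseudo-Riemannian $F$-Lie algebra of signature $(2,n-2)$ with $I=\mathbb{R}\hat a$ a two-sided ideal of $(\mathfrak g,\circ)$ and $\hat a\in Z(\mathfrak g)\cap Z(\mathfrak g)^\perp$; in particular $\langle\hat a,\hat a\rangle=0$ and $\ad_{\hat a}=0$, so $I$ is totally isotropic. Invoking the structure theorem for flat pseudo-Riemannian nilpotent Lie algebras from \cite{AubertMedina}, now applied in signature $(2,n-2)$, we get that $(\mathfrak g,\langle\cdot,\cdot\rangle)$ is the pseudo-Riemannian double extension of a flat \emph{Lorentzian} nilpotent Lie algebra $(B,\langle\cdot,\cdot\rangle')$ by the line $I$, with $\mu=0$ (forced by the centrality of $\hat a$) and with $I^\perp$ a two-sided ideal of $(\mathfrak g,\ast)$; hence $\mathfrak g=\mathbb{R}\hat a\oplus B\oplus\mathbb{R}d$ and the Lie bracket, the Levi--Civita product and the product $\circ$ take the shapes \eqref{LieBracketExtension}, \eqref{Levi--CivitaProductExtension} and \eqref{AssociativeCommutativeProductExtension}. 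As $B$ is a quotient of a subalgebra of the nilpotent Lie algebra $\mathfrak g$, it is itself a nilpotent Lie algebra; and since $\ad_d$ acts on $B$ modulo $\mathbb{R}\hat a$ as $D$, nilpotency of $\mathfrak g$ forces $D$ to be nilpotent. Because $\hat a\circ d=\lambda\hat a$, nilpotency of $\circ$ gives $\lambda=0$, exactly as in the proof of Theorem~\ref{bi-Nil}; feeding $\lambda=0$ into the formulas for the iterates $r_x^{\,k}(y)$, $r_d^{\,k}(x)$ and $r_d^{\,k}(d)$ recorded there, nilpotency of $\circ$ forces $\underline{\circ}$ to be nilpotent, so $(B,\underline{\circ})$ is bi-nilpotent. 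Finally, Lemma~\ref{Reduction} and Proposition~\ref{PropReduction} identify $(B,\underline{\circ},\langle\cdot,\cdot\rangle')$ with a weakly bi-nilpotent Lorentzian $F$-Lie algebra and realize $(\mathfrak g,\circ,\langle\cdot,\cdot\rangle)$ as its double extension with respect to $(0,0,\beta,a_0,b_0,u,D,v)$ with $D$ nilpotent.

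For the converse, start from the double extension $\mathfrak g=\mathbb{R}\hat a\oplus B\oplus\mathbb{R}d$ of a weakly bi-nilpotent Lorentzian $F$-Lie algebra $(B,\underline{\circ},\langle\cdot,\cdot\rangle')$ with respect to $(0,0,\beta,a_0,b_0,u,D,v)$, $D$ nilpotent. By \cite{AubertMedina}, $\mathfrak g$ is a non-abelian nilpotent Lie algebra carrying a flat scalar product $\langle\cdot,\cdot\rangle$ whose signature exceeds that of $\langle\cdot,\cdot\rangle'$ by one unit, hence equals $(2,n-2)$, and with $\hat a\in Z(\mathfrak g)\cap Z(\mathfrak g)^\perp$. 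It remains to show that $\circ$ as in \eqref{AssociativeCommutativeProductExtension} is nilpotent. Since $\lambda=0$, equation \eqref{Eq3} becomes $v^{2}=\overline{r}_{a_0}$, the right multiplication by $a_0$ in $(B,\underline{\circ})$; bi-nilpotency of $B$, i.e.\ nilpotency of $\overline{r}_{a_0}$, forces $v$ to be nilpotent. Substituting $\lambda=0$ and the nilpotency of $v$ and $\underline{\circ}$ into the explicit expressions for $r_x^{\,k}(y)$, $r_d^{\,k}(x)$ and $r_d^{\,k}(d)$, exactly as in the proof of Theorem~\ref{bi-Nil}, shows that every right multiplication of $(\mathfrak g,\circ)$ is nilpotent, i.e.\ $(\mathfrak g,\circ)$ is bi-nilpotent.

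The part requiring the most care is not the $\circ$-side manipulations, which merely transcribe the computations of Theorem~\ref{bi-Nil}, but the structural input from \cite{AubertMedina}: one has to check that their double-extension description of flat nilpotent pseudo-Riemannian Lie algebras, stated there for the Lorentzian-to-Riemannian step, goes through in signature $(2,n-2)$ with $Z(\mathfrak g)\cap Z(\mathfrak g)^\perp$ playing the role of $N(\mathfrak g)\cap Z(\mathfrak g)$, that the reduced base is Lorentzian — which is exactly where $n\geq 4$ is needed — and that, in the converse direction, the condition $\hat a\in Z(\mathfrak g)^\perp$ is genuinely recovered; this last point is where the non-abelian hypothesis on $(\mathfrak g,\circ)$ enters, since it prevents $d$ from falling into the center.
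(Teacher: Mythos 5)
Your strategy is the same as the paper's: transcribe the proof of Theorem~\ref{bi-Nil}, with the abelian Riemannian base replaced by a Lorentzian one, and indeed the $\circ$-side part of your argument (forcing $\lambda=0$ from $\hat a\circ d=\lambda\hat a$, the iterates $r_x^{\,k}$, $r_d^{\,k}$, and $v^2=\overline{r}_{a_0}$ via \eqref{Eq3}) is exactly what the paper does. The one load-bearing step is the one you yourself flag as ``requiring the most care'', and there your proposal has a gap: the structural input in signature $(2,n-2)$ cannot be obtained by ``applying \cite{AubertMedina} in signature $(2,n-2)$''. The structure theorem of Aubert--Medina used in Theorem~\ref{bi-Nil} is specific to flat \emph{Lorentzian} nilpotent Lie algebras; it produces the isotropic element in $N(\mathfrak{g})\cap Z(\mathfrak{g})$ and the rigid parameters $u=D$, $D^2=0$, $\textnormal{im}(D)\subset(\mathbb{R}b_0)^\perp$, none of which match the present setting, where $u$ is arbitrary, $D$ is only nilpotent, and the distinguished line sits in $Z(\mathfrak{g})\cap Z(\mathfrak{g})^\perp$. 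You acknowledge that one ``has to check'' that the decomposition goes through in signature $(2,n-2)$, but you neither carry out that check nor cite a result that does it, so the direct implication (and, in the converse, the recovery of the signature and of $\hat a\in Z(\mathfrak{g})\cap Z(\mathfrak{g})^\perp$) is left unproved.

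What the paper invokes at precisely this point is the work of Boucetta--Lebzioui \cite{BoucettaLebzioui2} on nilpotent Lie algebras with flat scalar products of signature $(2,n-2)$, $n\geq 4$: it is that classification which guarantees that such a $(\mathfrak{g},\langle\cdot,\cdot\rangle)$ is a pseudo-Riemannian double extension of a flat Lorentzian nilpotent Lie algebra along $\mathbb{R}\hat a$ with $\mu=0$ and $D$ nilpotent, and which underpins the converse direction. With that reference substituted for your appeal to \cite{AubertMedina}, the rest of your argument coincides with the paper's proof.
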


\begin{proof}
The proof follows a similar structure to that of Theorem \ref{bi-Nil}. However, we leverage the results established in \cite{BoucettaLebzioui2} concerning nilpotent Lie algebras endowed with flat scalar products of signature $(2,n-2)$ for $n\geq 4$.\\
\end{proof}

In analogy to Corollary  \ref{bi-NilCorollary}, the nonexistence of flat units also applies in this last case.

\section{Variations of the double extension}\label{S:TSC}

This section details the adaptation of our construction to yield two additional, closely related double extension processes applicable to the case of weakly flat pseudo-Riemannian $F$-Lie algebras. While the inclusion of flat units is admissible, their omission simplifies the discussion.

\subsection*{A stronger symmetric property} 

We revisit the symmetric property alluded to in Section \ref{S:FPRFS} (about weak Frobenius manifolds), which implies the Hertling–Manin relation. This condition is equivalent to the existence of a local potential over the associated simply connected and connected Lie group integrating the Lie algebra $\mathfrak{g}$. Despite the restrictive nature of this requirement, we can use similar techniques to construct a double extension process applicable to these specific algebraic objects.

Let $(\mathfrak{g},\circ, \langle \cdot,\cdot\rangle)$ be a real finite dimensional Lie algebra where $\circ:\mathfrak{g}\times \mathfrak{g}\to \mathfrak{g}$ is an associative commutative product and $\langle \cdot,\cdot\rangle$ is a flat scalar product on $\mathfrak{g}$ for which $\circ$ satisfies the Frobenius identity. Let us define the trilinear map $A(x,y,z)=\langle x\circ y,z\rangle$ for all $x,y,z\in\mathfrak{g}$ and consider the multi-linear map $\tilde{A}=\ast A$. Here $\ast$ stands for the Levi--Civita product associated to $\langle \cdot,\cdot\rangle$. The multi-linear map $\tilde{A}$ is explicitly given as
$$\tilde{A}(w,x,y,z)=-(\langle (w\ast x)\circ y,z \rangle+\langle x\circ(w\ast y),z \rangle+\langle x\circ y,w\ast z \rangle),$$
for all $w,x,y,z\in \mathfrak{g}$. We are particularly interested in cases where $\tilde{A}$ exhibits symmetry in all four arguments.\\

\begin{definition}
The triple $(\mathfrak{g},\circ, \langle \cdot,\cdot\rangle)$ above is said to be a Lie algebra of \emph{$F$-strong symmetric type} if $\tilde{A}(w,x,y,z)$ is symmetric in all four arguments $w,x,y,z\in \mathfrak{g}$.
\end{definition}

It can be readily verified that $\tilde{A}(w,x,y,z)$ is always symmetric under permutations of $x,y,z\in \mathfrak{g}$. Therefore, the $F$-strong symmetric property simplifies to verifying $\tilde{A}(w,\cdot,\cdot,\cdot)=\tilde{A}(\cdot,w,\cdot,\cdot)$ for all $w \in \mathfrak{g}$.\\

\begin{remark}
Lie algebras of $F$-strong symmetric type verify the Hertling--Manin relation from Definition \ref{mainDef3}. This is consequence of Theorem 2.15 in \cite{Hertling}. Thus, these types of algebraic objects give rise to examples of (weakly) flat pseudo-Riemannian $F$-Lie algebras. It is also worth mentioning that, after integrating the Lie algebra $\mathfrak{g}$ to a connected and simply connected Lie group $G$ (compare Remark \ref{RemarkIntegration}), we get an all-symmetry condition associated to the left invariant $(0,4)$-tensor field $(\nabla_WA)(X,Y,Z)$ where $A(X,Y,Z)=\eta(X\circ Y,Z)$. Additionally, as already mentioned in Section \ref{S:FPRFS}, such an all-symmetry condition is further equivalent to the existence of a left invariant potential $\Phi$ on $G$ verifying $(XYX)\Phi=A(X,Y,Z)$ for any flat vector fields $X,Y,Z$ (\cite[p. 22, Remarks 2.17]{Hertling}).

\end{remark}

Let $I$ be a totally isotropic subspace of dimension 1 in $(\mathfrak{g},\langle \cdot,\cdot \rangle)$ which is at the same time a two-sided ideal of $(\mathfrak{g},\circ)$ and $(\mathfrak{g},\ast)$ and such that $I^\perp$ is a right ideal of $(\mathfrak{g},\ast)$. It is simple to see that $\tilde{A}$ as well as the property of being symmetric in all four arguments pass to the quotient $B$ in Lemma \ref{Reduction}. Therefore, this allows us to leverage the double extension results from the previous section for $F$-strong symmetric Lie algebras. Namely, we only need to determine under what conditions $\tilde{A}(w,x,y,z)$ is symmetric in all four arguments $w,x,y,z\in \mathfrak{g}$. After using the expressions for both the Levi-Civita product \eqref{Levi--CivitaProductExtension} and the associative commutative product \eqref{AssociativeCommutativeProductExtension} we get that the equations below must be satisfied.

	\begin{itemize}
		\item If $w=a$ then simple computations show that $\tilde{A}(a,\cdot,\cdot,\cdot)=\tilde{A}(\cdot,a,\cdot,\cdot)=0$ since $a\ast (B\oplus \mathbb{R}d)=0$, $B\ast a=0$, $a\circ (B\oplus \mathbb{R}a)=0$, and $a\perp (B\oplus \mathbb{R}a)$.\\

		\item Let us consider the case $w=d$. Firstly, it follows that $\tilde{A}(d,a,d,d)= \tilde{A}(a,d,d,d)$ if and only if $-\lambda\mu =0$. Secondly, $\tilde{A}(d,x,y,z)= \tilde{A}(x,d,y,z)$ for all $x,y,z\in B$ if and only if
		
		\begin{equation}\label{EqS1}
			(D-u)(x)\underline{\circ} y+x\underline{\circ} (D-u)(y)-(D-u)(x\underline{\circ} y)=-u(x)\underline{\circ} y+v(x\underline{\ast}y)-x\underline{\ast}v(y),\\
		\end{equation}
		
		for all $x,y\in B$. Thirdly, $\tilde{A}(d,x,y,d)= \tilde{A}(x,d,y,d)$ for all $x,y,z\in B$ if and only if 
			
		\begin{equation}\label{EqS2}
			[v,D-u]=-2vu +\lambda u+\mu v-R_{a_0}-r_{b_0}.
		\end{equation}
		
		Fourthly, $\tilde{A}(d,x,d,d)= \tilde{A}(x,d,d,d)$ for all $x\in B$ if and only if
		\begin{equation}\label{EqS3}
			-\lambda b_0 -(D-u)(a_0)+2v(b_0)-2\mu a_0=-3u^\ast(a_0).
		\end{equation}
		
		The other possible cases hold true trivially. \\
		
		\item We now analyze the remaining cases for $w\in B$. Firstly, the identity $\tilde{A}(w,x,y,z)=\tilde{A}(x,w,y,z)$ holds true for all $x,y,z\in B$ because, due to Lemma \ref{Reduction}, the multi-linear map $\tilde{A}'=\underline{\ast}A'$ associated to the triple $(\underline{\ast},\underline{\circ},\langle\cdot,\cdot\rangle'
		)$ on $B$ already satisfies the symmetry property we are verifying. Secondly, $\tilde{A}(w,x,y,d)=\tilde{A}(x,w,y,d)$ if and only if
		\begin{equation}\label{EqS4}
			v(x\underline{\ast}y)-x\underline{\ast} v(y)-y\underline{\circ}u(x)=v(y\underline{\ast}x)-y\underline{\ast} v(x)-x\underline{\circ}u(y),
		\end{equation}
		for all $x,y\in B$. Thirdly, $\tilde{A}(w,x,d,d)=\tilde{A}(x,w,d,d)$ if and only if
		\begin{equation}\label{EqS5}
			\lambda u(x)-R_{a_0}(x)-2v(u(x))=\lambda u^\ast(x)+R^\ast_x(a_0)-2u^\ast(v(x)),
		\end{equation}
		for all $x\in B$. The other possible cases hold true trivially. 
\end{itemize}

Therefore, based on the derived conditions, we can now rewrite the Lie algebra structure of $F$-strong symmetric type over $\mathfrak{g}$ similarly as we did in Proposition \ref{PropReduction}. Furthermore, this analysis provides a method for constructing Lie algebras of $F$-strong symmetric type.\\

\begin{proposition}
	Let $(B,\underline{\circ},\langle\cdot,\cdot \rangle')$ be a Lie algebra of $F$-strong symmetric type. Assume that there are parameters $\mu,\lambda,\beta\in \mathbb{R}$, $a_0,b_0\in B$, $u\in Z^1_{CE}(B,B)_L$, $D\in \textnormal{Der}(B)$, $v\in Z^1_{AS}(B,B)_r$ satisfying that
\begin{itemize}
\item $\lambda \mu=0$, $(D-u)^\ast=-(D-u)$, $v^\ast=v$, and

\item the equations \eqref{Eq1} to \eqref{Eq3} and \eqref{EqS1} to \eqref{EqS5} hold.
\end{itemize}
Then, the vector space $\mathfrak{g}=\mathbb{R}a\oplus B\oplus \mathbb{R}d$ equipped with 
\begin{itemize}
\item a scalar product $\langle \cdot,\cdot\rangle$ which extends $\langle\cdot,\cdot \rangle'$ verifying that $\mathbb{R}(a,d)$ is a hyperbolic plane orthogonal to $B$,

\item a Levi--Civita product $\ast$ given by \eqref{Levi--CivitaProductExtension}, and

\item an associative commutative product $\circ$ given by  \eqref{AssociativeCommutativeProductExtension},

\end{itemize}

becomes a Lie algebra of $F$-strong symmetric type.\\

\end{proposition}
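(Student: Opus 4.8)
The plan is to check directly the three defining data of a Lie algebra of $F$-strong symmetric type on $\mathfrak{g}=\mathbb{R}a\oplus B\oplus\mathbb{R}d$: that $\circ$, given by \eqref{AssociativeCommutativeProductExtension}, is associative and commutative and that $\langle\cdot,\cdot\rangle$ satisfies the Frobenius identity with respect to it; that $\langle\cdot,\cdot\rangle$, extending $\langle\cdot,\cdot\rangle'$ with $\mathbb{R}(a,d)$ a hyperbolic plane orthogonal to $B$, is a flat scalar product whose Levi--Civita product is the prescribed $\ast$ of \eqref{Levi--CivitaProductExtension}; and, crucially, that the associated $(0,4)$-tensor $\tilde{A}$ on $\mathfrak{g}$ is symmetric in all four arguments. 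Once the last point is established, the Hertling--Manin relation for $\mathfrak{g}$ is automatic by Theorem 2.15 of \cite{Hertling}, so $\mathfrak{g}$ is indeed a Lie algebra of $F$-strong symmetric type (hence also a weakly flat pseudo-Riemannian $F$-Lie algebra).

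The first two data require no genuinely new work. The hypotheses $D\in\textnormal{Der}(B)$, $(D-u)^\ast=-(D-u)$, the cocycle property of $u$, and equations \eqref{Eq1}, \eqref{Eq2} are precisely the conditions in \cite{AubertMedina} ensuring that $\langle\cdot,\cdot\rangle$ is flat with Levi--Civita product \eqref{Levi--CivitaProductExtension} and that the Lie bracket takes the form \eqref{LieBracketExtension}; I would simply cite this. For $\circ$, commutativity is immediate from $v^\ast=v$; the Frobenius identity follows from the cocycle property of $v$ together with the normalisations recorded just before \eqref{Eq3}; and associativity is equivalent to \eqref{Eq3}. These are exactly the computations of the ``decomposition'' discussion preceding Proposition \ref{PropReduction}, run in reverse.

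The core of the proof is the full symmetry of $\tilde{A}$. Since $\tilde{A}(w,x,y,z)$ is automatically symmetric under permutations of $x,y,z$, it suffices to verify $\tilde{A}(w,x,y,z)=\tilde{A}(x,w,y,z)$ for all $w,x,y,z\in\mathfrak{g}$, and I would organise this according to whether the two interchanged slots lie in $\mathbb{R}a$, in $B$, or in $\mathbb{R}d$. When one slot is $a$, both sides vanish using $a\ast(B\oplus\mathbb{R}d)=B\ast a=0$, $a\circ(B\oplus\mathbb{R}a)=0$, and $a\perp(B\oplus\mathbb{R}a)$. When the distinguished slot is $d$, expanding $\tilde{A}$ via \eqref{Levi--CivitaProductExtension} and \eqref{AssociativeCommutativeProductExtension} and collecting the $\mathbb{R}a$-, $B$-, and $\mathbb{R}d$-components yields precisely $\lambda\mu=0$ together with \eqref{EqS1}, \eqref{EqS2}, \eqref{EqS3}; when the distinguished slot lies in $B$, one recovers the $F$-strong symmetry of $(B,\underline{\circ},\langle\cdot,\cdot\rangle')$ itself together with \eqref{EqS4} and \eqref{EqS5}. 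All of these are among the hypotheses, so $\tilde{A}$ is fully symmetric, and the proof concludes as indicated above.

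The main obstacle I anticipate is the bookkeeping in this last step: before simplification the identity $\tilde{A}(w,x,y,z)=\tilde{A}(x,w,y,z)$ generates many terms built from $D$, $u$, $v$, the elements $a_0,b_0$, and the scalars $\lambda,\mu,\beta$, and one must carefully track which of the three summands of $\mathfrak{g}$ each term lands in and repeatedly use $v^\ast=v$ and $(D-u)^\ast=-(D-u)$ to collapse everything onto \eqref{EqS1}--\eqref{EqS5}. The reassuring point is that each case equivalence was already established in the itemised discussion preceding the statement, so no new identity can appear; the content of the proof is merely to observe that the listed conditions — necessary there — are jointly sufficient here.
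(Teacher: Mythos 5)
Your proposal is correct and follows essentially the same route as the paper: the paper proves this proposition implicitly through the itemised case analysis immediately preceding it, where $\lambda\mu=0$ and \eqref{EqS1}--\eqref{EqS5} are derived as exactly the conditions equivalent to the four-argument symmetry of $\tilde{A}$ on $\mathbb{R}a\oplus B\oplus\mathbb{R}d$, with \eqref{Eq1}--\eqref{Eq3} and the constraints on $u,D,v$ guaranteeing (via \cite{AubertMedina} and the discussion before Proposition \ref{PropReduction}) that $\ast$ is the Levi--Civita product of the extended flat metric and that $\circ$ is commutative, associative and Frobenius. Your reading of those equivalences in reverse, organised by which slot of $\tilde{A}$ lies in $\mathbb{R}a$, $B$ or $\mathbb{R}d$, is precisely the intended argument.
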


The Lie algebra of $F$-strong symmetric type $(\mathfrak{g},\circ, \langle \cdot,\cdot\rangle)$  constructed herein is called the \emph{double extension} of the Lie algebra of $F$-strong symmetric type $(B,\underline{\circ},\langle\cdot,\cdot \rangle')$ with respect to $(\mu, \lambda,\beta,a_0,b_0,u,D,v)$. \\

\begin{remark}
Similar results as those stated in Theorem \ref{bi-Nil} and Proposition \ref{bi-Nil2} can be also stated for bi-nilpotent Lie algebras of $F$-strong symmetric type.
\end{remark}

\subsection*{Frobenius-like Poisson algebras}

Let us now explain how to obtain a double extension for Poisson algebras compatible with a flat scalar product. It is worth noting that the forthcoming construction can also be applied when dealing with complex Poisson algebras.\\

\begin{definition}
A real Lie algebra $\mathfrak{g}$ is said to be a \emph{Poisson algebra} if it can be endowed with an associative and commutative product $\circ:\mathfrak{g}\times \mathfrak{g}\to \mathfrak{g}$ such that the following Leibniz condition is satisfied:
\begin{equation}\label{LaibnizPoisson}
[x,y\circ z]=[x,y]\circ z+[x,z]\circ y ,\qquad x,y,z\in \mathfrak{g}.\\
\end{equation}

The Lie bracket acts as a derivation of the associative commutative product. Therefore, the Leibnizator $\mathcal{L}(x,y,z)=[x,y\circ z]-[x,y]\circ z-[x,z]\circ y$ vanishes for all $x,y,z\in \mathfrak{g}$. Additionally, if $\mathfrak{g}$ admits a flat scalar product $\langle \cdot,\cdot \rangle$ for which $\circ$ satisfies the Frobenius identity then we refer to the triple $(\mathfrak{g},\circ,\langle \cdot,\cdot \rangle)$ as a \emph{Frobenius-like Poisson algebra}.
\end{definition}

\begin{remark}
Since the Leibnizator identically vanishes in Frobenius-like Poisson algebras, they trivially satisfy the Hertling-Manin relation and thus exemplify weakly flat pseudo-Riemannian $F$-Lie algebras.
\end{remark}

We consider a Frobenius-like Poisson algebra $(\mathfrak{g},\circ,\langle \cdot,\cdot\rangle)$ with a totally isotropic subspace $I$ of dimension 1 that satisfies the following properties: $I$ is a two-sided ideal of both $(\mathfrak{g},\circ)$ and $(\mathfrak{g},\ast)$, and $I^\perp$ is a right ideal of $(\mathfrak{g},\ast)$. Building upon Lemma \ref{Reduction}, it not hard to see that the Leibniz identity \eqref{LaibnizPoisson} holds on the quotient space $B=I^\perp/I$ so that $B$ inherits the structure of a  Frobenius-like Poisson algebra $(\underline{\circ},\langle \cdot,\cdot\rangle')$.  We maintain the identification $\mathfrak{g}$ with $\mathbb{R}a\oplus B\oplus \mathbb{R}d$ established previously (with \eqref{LieBracketExtension} for the Lie bracket and \eqref{AssociativeCommutativeProductExtension} for the associative commutative product).\\

The crucial result is that the Leibniz identity \eqref{LaibnizPoisson} holds on $\mathfrak g$, or equivalently the Leibnizator vanishes, if and only if the following conditions are satisfied.\\

\begin{itemize}
\item Let $x,y \in \mathfrak g$, then $\mathcal{L}(d,d,a)=0$ if and only if $\lambda \mu=0$. Additionaly, we obtain $\mathcal{L}(x,y,a)=0=\mathcal{L}(x,a,y)$, where we have used $[a,\mathbb{R}a\oplus B]=0$, $a\circ (\mathbb{R}a\oplus B)=0$, and $a\circ d=\lambda a$.\\

\item For all $x,y\in B$, $\mathcal{L}(x,y,d)=0$ if and only if
\begin{equation}\label{EqP1}
(u-u^\ast)(v-\lambda\textnormal{id}_B)=\tilde{\textnormal{ad}}_{a_0}+D^\ast v\quad\textnormal{and}\quad v([x,y]_B)=[v(x),y]_B-D(y)\underline{\circ} x.
\end{equation}

The same equations hold for the triple $(x,d,y)$.\\

\item For all $x,y\in B$, $\mathcal{L}(d,x,y)=0$  if and only if
\begin{equation}\label{EqP2}
	-\mu v+vD+D^\ast v=-r_{b_0}\quad\textnormal{and}\quad D(x\underline{\circ}y)=D(x)\underline{\circ}y+x\underline{\circ}D(y),
\end{equation}

 implying $D$ is derivation of $(B,\underline{\circ})$.\\

\item For all $x\in B$, $\mathcal{L}(d,x,d)=0$ if and only if
\begin{equation}\label{EqP3}
	-\mu a_0+v(b_0)=\lambda b_0-D^\ast(a_0)\quad\textnormal{and}\quad [D,v]=0.
\end{equation}

The same identities hold for the triple $(d,d,x)$.\\

\item For all $x\in B$, $\mathcal L(x, d,d)=0$  if and only if
\begin{equation}\label{EqP4}
	(u-u^\ast-2D^\ast)(a_0)=-\lambda b_0\quad\textnormal{and}\quad \textnormal{ad}_{a_0}=(2v-\lambda\textnormal{id}_B)D.\\
\end{equation}

\item For all $x,y,z\in B$, $\mathcal L(x,y,z)=0$ if and only if
\begin{equation}\label{EqP5}
(u-u^\ast)(x\underline{\circ} y)=\textnormal{ad}^\ast_y(v(x))+\textnormal{ad}^\ast_x(v(y)).
\end{equation}

Furthermore, the Leibniz identity \eqref{LaibnizPoisson} is satisfied in $(B,[\cdot,\cdot]_B,\underline{\circ })$.\\
\item Finally, $\mathcal L(d,d,d)=0$ if and only if
\begin{equation}\label{EqP6}
-\mu \beta+\langle a_0,b_0\rangle'=0\quad\textnormal{and}\quad D(a_0)=0.\\
\end{equation}
\end{itemize}

This result establishes a method for constructing new $n$-dimensional Frobenius-like Poisson algebras from a given $(n-2)$-dimensional one satisfying specific properties.\\

\begin{proposition}
Let $(B,\underline{\circ},\langle\cdot,\cdot \rangle')$ be a Frobenius-like Poisson algebra. Suppose there exist parameters $\mu,\lambda,\beta\in \mathbb{R}$, $a_0,b_0\in B$, $u\in Z^1_{CE}(B,B)_L$, $D\in \textnormal{Der}(B)$, $v\in Z^1_{AS}(B,B)_r$ satisfying the following conditions:
\begin{itemize}
\item $\lambda \mu=0$, $(D-u)^\ast=-(D-u)$, $v^\ast=v$, and

\item the algebraic equations \eqref{Eq1} to \eqref{Eq3} and \eqref{EqP1} to \eqref{EqP6} hold.
\end{itemize}

Then,  the vector space $\mathfrak{g}=\mathbb{R}a\oplus B\oplus \mathbb{R}d$ equipped with
\begin{itemize}
\item a scalar product $\langle \cdot,\cdot\rangle$ that extends $\langle\cdot,\cdot \rangle'$, such that the subspace $\mathbb{R}(a,d)$ is a hyperbolic plane orthogonal to $B$,

\item a Lie bracket $[\cdot,\cdot]$ defined by \eqref{LieBracketExtension}, and

\item an associative commutative product $\circ$ defined by \eqref{AssociativeCommutativeProductExtension}
\end{itemize}
becomes a Frobenius-like Poisson algebra.

\end{proposition}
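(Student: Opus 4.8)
The plan is to mirror the structure of the proofs of Theorem \ref{ThmDoubleExtension} and of the $F$-strong symmetric case: start from the vector space $\mathfrak{g}=\mathbb{R}a\oplus B\oplus \mathbb{R}d$ equipped with the extended scalar product, the Lie bracket \eqref{LieBracketExtension}, the Levi--Civita product \eqref{Levi--CivitaProductExtension} and the associative commutative product \eqref{AssociativeCommutativeProductExtension}, and verify each defining axiom of a Frobenius-like Poisson algebra in turn. First I would invoke the part of the machinery already established before this statement: since $v^\ast=v$ and $(D-u)^\ast=-(D-u)$ and \eqref{Eq1}--\eqref{Eq3} hold, Proposition \ref{PropReduction} and Theorem \ref{ThmDoubleExtension} guarantee that $\ast$ given by \eqref{Levi--CivitaProductExtension} is genuinely the Levi--Civita product of the extended scalar product (so the bracket $[\cdot,\cdot]=\ast-\ast^{\mathrm{op}}$ from \eqref{LieBracketExtension} is a Lie bracket and $\langle\cdot,\cdot\rangle$ is flat), and that $\circ$ from \eqref{AssociativeCommutativeProductExtension} is associative, commutative, and satisfies the Frobenius identity $\langle x\circ y,z\rangle=\langle x,y\circ z\rangle$. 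Hence the only thing left to check is that the Leibnizator $\mathcal{L}(x,y,z)=[x,y\circ z]-[x,y]\circ z-[x,z]\circ y$ vanishes identically on $\mathfrak{g}$.

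Next I would reduce the vanishing of $\mathcal{L}$ to a finite list of cases by exploiting trilinearity and the symmetry $\mathcal{L}(x,y,z)=\mathcal{L}(x,z,y)$ (the Leibnizator is symmetric in its last two slots, as noted in Section \ref{S:FPRFS}). Writing each argument as a combination of $a$, an element of $B$, and $d$, it suffices to evaluate $\mathcal{L}$ on the basis-type quadruples: $(d,d,a)$, $(x,y,a)$, $(x,a,y)$ for $x,y\in B$; $(x,y,d)$ and $(x,d,y)$; $(d,x,y)$; $(d,x,d)$ and $(d,d,x)$; $(x,d,d)$; $(x,y,z)$ with $x,y,z\in B$; and $(d,d,d)$. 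For the cases involving $a$ one uses $[a,\mathbb{R}a\oplus B]=0$, $a\circ(\mathbb{R}a\oplus B)=0$ and $a\circ d=\lambda a$ (exactly as in the $F$-strong symmetric discussion) to see that $\mathcal{L}$ vanishes outright, leaving only the constraint $\lambda\mu=0$ coming from $\mathcal{L}(d,d,a)$. For each of the remaining quadruples I would substitute \eqref{LieBracketExtension} and \eqref{AssociativeCommutativeProductExtension}, separate the resulting expression into its $B$-component and its $\mathbb{R}a$-component (the $\mathbb{R}d$-component is automatically zero since $[\mathfrak{g},\mathfrak{g}]\subset\mathbb{R}a\oplus B$ and $\circ$ never produces a $d$-term except from $d\circ d$, which is handled in $\mathcal{L}(d,d,d)$), and read off precisely the pairs of scalar/operator equations \eqref{EqP1}--\eqref{EqP6}; the $\mathbb{R}a$-components will use the self-adjointness relations, the definitions $\tilde{\mathrm{ad}}_s(x)=\mathrm{ad}^\ast_x(s)$, $R_{b_0}$, $r_{a_0}$, and the identities $\langle u(x),y\rangle'=\langle x,u^\ast(y)\rangle'$, while the $B$-components record the purely structural conditions such as $D$ being a derivation of $(B,\underline{\circ})$ and $[D,v]=0$. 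The vanishing of $\mathcal{L}$ restricted to $B\times B\times B$ is equivalent to \eqref{EqP5} together with the Leibniz identity for $(B,[\cdot,\cdot]_B,\underline{\circ})$, which holds by hypothesis since $(B,\underline{\circ},\langle\cdot,\cdot\rangle')$ is assumed to be Frobenius-like Poisson.

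Finally, having shown that every component of $\mathcal{L}$ vanishes precisely under \eqref{EqP1}--\eqref{EqP6} (plus $\lambda\mu=0$, which is part of the hypotheses), I would conclude that $[\cdot,\cdot]$ is a derivation of $\circ$, hence $\mathfrak{g}$ is a Poisson algebra; combined with the already-established flatness of $\langle\cdot,\cdot\rangle$ and the Frobenius identity for $\circ$, this makes $(\mathfrak{g},\circ,\langle\cdot,\cdot\rangle)$ a Frobenius-like Poisson algebra, as claimed. I expect the main obstacle to be purely computational bookkeeping: correctly expanding the nine-term-free but still lengthy Leibnizator on the mixed quadruples and matching the $\mathbb{R}a$-components against the adjoint-twisted operators $\tilde{\mathrm{ad}}_{a_0}$, $D^\ast v$, $R^\ast_x(a_0)$, etc., so that the extracted equations coincide exactly with \eqref{EqP1}--\eqref{EqP6} rather than with some equivalent rearrangement; keeping track of which permutations of a quadruple give the same equation (and verifying they do, via the $\mathcal{L}(x,y,z)=\mathcal{L}(x,z,y)$ symmetry and the Jacobi identity on $\mathfrak{g}$) is the delicate part, but it is routine in the same sense as the Hertling--Manin computations performed earlier in Section \ref{S:DE}.
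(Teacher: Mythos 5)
Your proposal follows essentially the same route as the paper: the paper gives no separate proof of this proposition, since it is exactly the conclusion of the preceding case-by-case computation showing that the Leibnizator on $\mathbb{R}a\oplus B\oplus\mathbb{R}d$ vanishes precisely when $\lambda\mu=0$ and \eqref{EqP1}--\eqref{EqP6} hold, combined with the earlier double-extension analysis (equations \eqref{Eq1}--\eqref{Eq3} together with $(D-u)^\ast=-(D-u)$, $v^\ast=v$) that already yields the flat scalar product, the Levi--Civita product, associativity, commutativity and the Frobenius identity. The only cosmetic imprecision is your citation of Theorem \ref{ThmDoubleExtension}, whose hypotheses include \eqref{eq1}--\eqref{eq9}; one should instead invoke the computations leading to Proposition \ref{PropReduction} (or observe that the vanishing of the Leibnizator makes the Hertling--Manin conditions automatic), which is how the paper proceeds.
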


We refer to the Frobenius-like Poisson algebra $(\mathfrak{g},\circ, \langle \cdot,\cdot\rangle)$ constructed in the preceding result as the double extension of the Frobenius-like Poisson algebra $(B,\underline{\circ},\langle\cdot,\cdot \rangle')$ with respect to the parameters $(\mu, \lambda,\beta,a_0,b_0,u,D,v)$.\\

\begin{remark}
Similar results as those established in Theorem \ref{bi-Nil} and Proposition \ref{bi-Nil2} can be proved to apply for Frobenius-like bi-nilpotent Poisson algebras.
\end{remark}

\section{Examples}\label{S:E}

We now illustrate the key constructions developed throughout the preceding sections with a series of pertinent examples.\\

\begin{example}
Up to isomorphism, there exists a unique non-abelian Lie algebra of dimension 2. This Lie algebra, denoted by $\mathfrak{aff}(\mathbb{R})=\mathbb{R}(a,d)$, is equipped with the Lie bracket defined by $[d,a]=a$. Furthermore, it admits a flat Lorentzian scalar product given by $\langle d,a \rangle=1$.\\

The Levi-Civita product $\ast$ and the associative commutative product $\circ$ constructed via the double extension process are given explicitly as:

	$$\begin{tabular}{l | c  r}
	$\ast$ & $d$ & $a$ \\
	\hline
	$d$ & $-d$ & $a$\\
	$a$ & $0$ & $0$\\
\end{tabular}\qquad\qquad\textnormal{and}\qquad\qquad \begin{tabular}{l | c  r}
	$\circ$ & $d$ & $a$ \\
	\hline
	$d$ & $\beta a+\lambda d$ & $\lambda a$\\
	$a$ & $\lambda a$ & $0$\\
\end{tabular}
$$

By setting either $\lambda=1$ or $\lambda=0$, we obtain the only two possibilities for weakly flat pseudo-Riemannian non-abelian $F$-Lie algebras in dimension 2 with a non-trivial commutative associative product.  This classification aligns with the existing classifications for associative commutative algebras \cite{KobayashiShirayanagiTsukadaTakahasi,RemmGoze}  and flat pseudo-Riemannian Lie algebras \cite{AubertMedina} in this dimension.  It is important to note that if $\lambda=1$ and $\beta=0$, the element $d$ assumes the role of a unit element. Nonetheless, it is not flat.\\

For the remaining two realizations of the double extension process (the $F$-strong symmetric case and the Frobenius-like Poisson algebras), we are compelled to restrict $\lambda$ to 0. This constraint simplifies the product $\circ$ to:
$$\begin{tabular}{l | c  r}
	$\circ$ & $d$ & $a$ \\
	\hline
	$d$ & $\beta a$ & $0$\\
	$a$ & $0$ & $0$\\
\end{tabular}$$

Furthermore, setting $\beta\neq 0$ recovers the unique non-trivial and non-abelian Poisson algebra in dimension 2, as detailed in \cite{GozeRemm}.

\end{example}

\begin{example}
The 3-dimensional Heisenberg Lie algebra $\mathfrak{h}_3=\mathbb{R}(a,x,d)$, with Lie bracket $[d,x]=-a$, admits a flat scalar product (see \cite{AubertMedina}). The flat scalar product $\langle\cdot,\cdot\rangle$ and its Levi-Civita product $\ast$ are given by:

$$\langle\cdot,\cdot\rangle= \left( 
\begin{array}{ccc} &  & 1 \\
	& 1 &\\
1	& & 
\end{array}%
\right)\qquad\leadsto \qquad \begin{tabular}{l | c  r  c  r}
	$\ast$ & $a$ & $x$ & $d$ \\
	\hline
	$a$ & $0$ & $0$ & $0$\\
	$x$ & $0$ & $0$ & $0$\\
	$d$ & $0$ & $-a$ & $x$\\
\end{tabular}$$

It can be constructed via the double extension process applied to the 1-dimensional real vector space $\mathbb{R}x$ equipped with scalar product $\langle x,x\rangle=1$ with respect to the parameters  $u=D=0$, $\mu=0$ and $b_0=x$.  Substituting these parameters into Equation \eqref{eq1} yields two possibilities: $\lambda=0$ or $v(x)=\lambda x$. Equation \eqref{Eq3} further clarifies that if $\lambda=0$, then the associative commutative product on $\mathbb{R}x$ becomes $x\underline{\circ}x=v^2(x)$. In this case, $v\in Z^1_{AS}(\mathbb{R}x,\mathbb{R}x)_r$ if and only if $v=0$ or $v=\textnormal{id}_{\mathbb{R}x}$. Otherwise, if $v(x)=\lambda x$, the product $\underline{\circ}$ becomes $x\underline{\circ}x=0$.\\

It is noteworthy that due to the abelian nature of $\mathbb{R}x$, all the equations from \eqref{eq2} to \eqref{eq9} are automatically satisfied. Consequently, by applying the double extension, we acquire three parametric families of weakly flat Lorentzian $F$-Lie algebra structures on $\mathfrak{h}_3$. These structures are characterized by the following associative commutative products\footnote{$\circ_1$ for $\lambda=0$ and $v=0$; $\circ_2$ for $\lambda=0$ and $v=\textnormal{id}_{\mathbb{R}x}$; and $\circ_3$ for $v(x)=\lambda x$ with $\lambda \neq 0$.}:

$$\begin{tabular}{l | c  r  c  r}
	$\circ_1$ & $a$ & $x$ & $d$ \\
	\hline
	$a$ & $0$ & $0$ & $0$\\
	$x$ & $0$ & $0$ & $\alpha a$\\
	$d$ & $0$ & $\alpha a$ & $\beta a+\alpha x$\\
\end{tabular}
\qquad\begin{tabular}{l | c  r  c  r}
$\circ_2$ & $a$ & $x$ & $d$ \\
\hline
$a$ & $0$ & $0$ & $0$\\
$x$ & $0$ & $x$ & $\alpha a+x$\\
$d$ & $0$ & $\alpha a+x$ & $\beta a+\alpha x$\\
\end{tabular}\qquad \begin{tabular}{l | c  r  c  r}
$\circ_3$ & $a$ & $x$ & $d$ \\
\hline
$a$ & $0$ & $0$ & $\lambda a$\\
$x$ & $0$ & $0$ & $\alpha a+\lambda x$\\
$d$ & $\lambda a$ & $\alpha a+\lambda x$ & $\beta a+\alpha x+\lambda d$\\
\end{tabular}$$
where  $\alpha, \beta, \lambda \in \mathbb{R}$, with the constraint $\lambda\neq 0$. Observe that for $\circ_3$, if $\lambda=1$ and $\alpha=\beta=0$, then element $d$ becomes a unit element. However, for this specific configuration $d$ is not flat.\\

We now proceed to examine the remaining two double extension processes associated with the Heisenberg Lie algebra $\mathfrak{h}_3$. In the $F$-strong symmetric case, from Equation \eqref{EqS2}, we can infer that $x\underline{\circ} x=0$. Additionally, Equation \eqref{EqS3} implies that $v(x)=\frac{\lambda}{2}x$. Since $\underline{\ast}=0$, Equations \eqref{EqS1} to \eqref{EqS5} are trivially satisfied. Therefore, this case yields a parametric family of Lie algebra structures of $F$-strong symmetric type over $\mathfrak{h}_3$. These structures are characterized by the associative commutative product:
$$ \begin{tabular}{l | c  r  c  r}
	$\circ_4$ & $a$ & $x$ & $d$ \\
	\hline
	$a$ & $0$ & $0$ & $\lambda a$\\
	$x$ & $0$ & $0$ & $\alpha a+\frac{\lambda}{2}x$\\
	$d$ & $\lambda a$ & $\alpha a+\frac{\lambda}{2}x$ & $\beta a+\alpha x+\lambda d$\\
\end{tabular}$$
where $\alpha,\beta,\lambda\in \mathbb{R}$. \\

For the Frobenius-like Poisson algebras case, upon substituting the initial parameters ($u=0=D, \mu=0, b_0=x$) into all the equations from \eqref{EqP1} to \eqref{EqP6}, we establish that $\mathfrak{h}_3$ admits a one parametric family of Frobenius-like Poisson structures. This family is characterized by the associative commutative product:
$$ \begin{tabular}{l | c  r  c  r}
	$\circ_5$ & $a$ & $x$ & $d$ \\
	\hline
	$a$ & $0$ & $0$ & $0$\\
	$x$ & $0$ & $0$ & $0$\\
	$d$ & $0$ & $0$ & $\beta a$\\
\end{tabular}$$
where $\beta\in \mathbb{R}$.

\end{example}

Let us now exhibit a simple manner to get interesting examples in higher dimensions.

\begin{example}
We consider the abelian Lie algebra $B=\mathbb{R}^n$  equipped with an arbitrary signature scalar product $\langle\cdot,\cdot\rangle'$, resulting in a trivial Levi-Civita product ($\underline{\ast}=0$). We analyze two specific cases of the double extension process.\\

\textbf{Case 1}: $D=0=u$. In this scenario, the Lie bracket on $\mathfrak{g}=\mathbb{R}a\oplus \mathbb{R}^n\oplus \mathbb{R}d$ simplifies to $[d,a]=\mu a$ and $[d,x]=-\langle b_0,x\rangle'a$ where $b_0\in \mathbb{R}^n$ and $\mu\in\mathbb{R}$. Furthermore, the Levi--Civita product  takes the following form:

$$ \begin{tabular}{l | c  r  c  r}
	$\ast$ & $a$ & $y$ & $d$ \\
	\hline
	$a$ & $0$ & $0$ & $0$\\
	$x$ & $0$ & $0$ & $0$\\
	$d$ & $\mu a$ & $-\langle b_0,y\rangle'a$ & $b_0-\mu d$\\
\end{tabular}$$

Our objective is reduced to identify associative commutative products $\underline{\circ}$ on $\mathbb{R}^n$ that satisfy the following system of equations:
\begin{eqnarray*}
v^2-\lambda v-r_{a_0} & = & 0,\\
-\lambda\mu a_0+\lambda v(b_0)-\lambda^2b_0& = &0\\
\lambda\mu v-\lambda r_{b_0}& = & 0,
\end{eqnarray*}
where $v\in Z^1_{AS}(\mathbb{R}^n,\mathbb{R}^n)_r$, $a_0,b_0\in\mathbb{R}^n$, and $\lambda,\mu\in \mathbb{R}$. \\

We can consider two particular subcases:
\begin{itemize}
\item $\lambda=0$: In this case, the first equation above, becomes $v^2=r_{a_0}$, and the other equations hold trivially.

\item $\lambda\neq 0$: If $a_0=0$, the first equation above, becomes $v(v-\lambda\textnormal{id}_{\mathbb{R}^n})=0$. Additionally, $v(b_0)=\lambda b_0$ and $\mu v=r_{b_0}$ from the other two equations. A specific solution is $\mu=0$ and  $v=\lambda\textnormal{id}_{\mathbb{R}^n}$, leading to $r_{b_0}=0$. This case results in flat units only when $\overline{e}=0$ and $b_0=0$, forcing $\mathfrak{g}$ to be abelian as well.\\
\end{itemize}

The double extension of Lie algebras of $F$-strong symmetric type is parametrized by the equations:

\begin{eqnarray*}
	\mu v-r_{b_0} & = & 0\\
	-\lambda b_0+2 v(b_0)-2\mu a_0& = &0
\end{eqnarray*}
where $v\in Z^1_{AS}(\mathbb{R}^n,\mathbb{R}^n)_r$, $a_0,b_0\in\mathbb{R}^n$, and $\lambda,\mu\in \mathbb{R}$ with $\lambda\mu=0$. \\

Moreover, the double extension of Frobenius-like Poisson algebras is parametrized by the equations:

\begin{eqnarray*}
	\mu v-r_{b_0} & = & 0\\
	-\mu a_0+v(b_0)& = &0\\
	-\mu\beta +\langle a_0,b_0\rangle'& = &0\\
	\lambda b_0& = & 0
\end{eqnarray*}
where $v\in Z^1_{AS}(\mathbb{R}^n,\mathbb{R}^n)_r$, $a_0,b_0\in\mathbb{R}^n$, and $\lambda,\mu,\beta\in \mathbb{R}$ with $\lambda\mu=0$.\\

An analogous analysis can be performed for specific parameter choices in each case.\\

\textbf{Case 2}: $u=u^\ast$ and $v=0$. Under this scenario, Equation \eqref{Eq3} implies $r_{a_0}=0$. Additionally, $D^\ast=2u-D$ and Equation \eqref{Eq1} must be satisfied. The Lie bracket on $\mathfrak{g}=\mathbb{R}a\oplus \mathbb{R}^n\oplus \mathbb{R}d$ becomes: $[d,a]=\mu a$ and $[d,x]=-\langle b_0,x\rangle'a+D(x)$, where $b_0\in \mathbb{R}^n$. The Levi--Civita product on $\mathfrak{g}$ takes the following form:
$$ \begin{tabular}{l | c  r  c  r}
	$\ast$ & $a$ & $y$ & $d$ \\
	\hline
	$a$ & $0$ & $0$ & $0$\\
	$x$ & $0$ & $\langle u(x),y\rangle'a$ & $-u(x)$\\
	$d$ & $\mu a$ & $-\langle b_0,y\rangle'a+(D-u)(y)$ & $b_0-\mu d$\\
\end{tabular}$$
\end{example}

We now focus on associative and commutative products $\underline{\circ}$ on $\mathbb{R}^n$ that satisfy $r_{a_0}=0$. For simplicity, let us further restrict ourselves to the case where $D$ acts as a derivation on the associative product $\underline{\circ}$. Therefore, these products must additionally fulfill the following system of equations:

\begin{eqnarray*}
	-\lambda\mu a_0-\lambda^2b_0+\lambda (2u-D)(a_0)& = &0 \\
	-\lambda r_{b_0}+2r_{(2u)(a_0)}& = & 0\\
	r_{D(a_0)}& = &0\\
\end{eqnarray*}
where $u\in \mathfrak{gl}(\mathbb{R}^n)$, $D\in\textnormal{Der}(\mathbb{R}^n,\underline{\circ})$, $a_0,b_0\in \mathbb{R}^n$, and $\mu,\lambda\in\mathbb{R}$.\\

We explore two specific subcases:

\begin{itemize}
\item $u=0$ and $a_0\in \textnormal{ker}(D)$: In this scenario, the system of equations reduces to $\lambda(\mu a_0+\lambda b_0)=0$ and $\lambda r_{b_0}=0$. Therefore, either $\lambda=0$ or $\lambda\neq 0$ with $b_0=-\frac{\mu}{\lambda}a_0$. The second condition directly imply $r_{b_0}=0$. Additionally, if $a_0=0$ and $\lambda\neq 0$, we obtain a flat unit provided $\overline{e}$ is a unit for $\underline{\circ}$ within the kernel of $D$. 

\item $u=\frac{1}{2}D$ and $a_0\in \textnormal{ker}(D)$:  Here, the equation $\frac{1}{4}D^2- \frac{1}{2}\mu D=0$ holds. Interestingly, the resulting solutions for the system are identical to those in the previous subcase.\\
\end{itemize}

For the double extension of Lie algebras of $F$-strong symmetric type we further assume that $D\in \textnormal{Der}(\mathbb{R}^n,\underline{\circ})$ and $u=0$. This implies that $r_{b_0}=0$, so that the construction is parametrized by the equation
$$-\lambda b_0-D(a_0)-2\mu a_0=0,$$
where $a_0,b_0\in\mathbb{R}^n$ and $\lambda,\mu\in \mathbb{R}$ with $\lambda\mu=0$. \\

Furthermore, if $D=0$, the double extension of Frobenius-like Poisson algebras must satisfy $\mu a_0=0$ and $\lambda b_0=0$ where $a_0,b_0\in\mathbb{R}^n$ and $\lambda,\mu,\beta\in \mathbb{R}$ with $\lambda\mu=0$.\\

Note that the preceding analysis for the abelian Lie algebra $\mathbb{R}^n$ allows us to construct examples of flat pseudo-Riemannian $F$-Lie algebras in dimensions $n+2$ for all $n\geq 0$. This can be achieved by leveraging the existing classifications of associative commutative algebras $(\mathbb{R}^n,\underline{\circ})$ documented in the literature (e.g., \cite{DeGraaf,GozeRemm} and its cited references).\\

%

\end{document}